\documentclass[11pt,  reqno]{amsart}
\usepackage{fullpage}

\usepackage{amsmath}
\usepackage{amsthm, mathrsfs, mathtools}
\usepackage{amsfonts}
\usepackage{amssymb}
\usepackage{amsaddr}
\usepackage{tikz-cd}
\usepackage[utf8x]{inputenc}
\usepackage{enumerate}
\usepackage[english]{babel}
\usepackage{esint}
\usepackage{hyperref}
\usepackage{bm}
\usepackage{bbm}
\usepackage[mathscr]{eucal}

\theoremstyle{plain}
\newtheorem{theorem}{Theorem}[section]
\newtheorem{lemma}[theorem]{Lemma}

\theoremstyle{definition}

\theoremstyle{remark}

\newtheorem{remark}[theorem]{Remark}

\numberwithin{equation}{section}

\newcommand{\ve}{\varepsilon}

\def\div{\,\mathrm{div}}

\newcommand{\dx}{\textup{d}x}
\newcommand{\dt}{\textup{d}t}
\newcommand{\ds}{\textup{d}s}
\newcommand{\dr}{\textup{d}r}

\definecolor{falured}{rgb}{0.5, 0.09, 0.09}
\definecolor{pinegreen}{rgb}{0.0, 0.47, 0.44}
\definecolor{denim}{rgb}{0.08, 0.38, 0.74}

\def\softd{{\leavevmode\setbox1=\hbox{d}%
		\hbox to 1.05\wd1{d\kern-0.4ex{\char039}\hss}}}

\allowdisplaybreaks[1]
\makeatletter
\def\@settitle{\begin{center}%
  \baselineskip14\p@\relax
    \huge
  \@title
  \end{center}%
}
\makeatother

\hypersetup{colorlinks=true, linkcolor=blue, citecolor=red, urlcolor=blue}

\begin{document}
	\title{Global weak solutions to the Cahn-Hilliard equation \\ 
    with degenerate mobility and singular diffusion}

\author[Monica Conti, Andrea Giorgini and Greta Ricchi]{Monica Conti, Andrea Giorgini and Greta Ricchi}

\address{Politecnico di Milano\\
Dipartimento di Matematica\\
Via E. Bonardi 9, I-20133 Milano, Italy\\
\href{mailto:monicaconti@polimi.it}{monica.conti@polimi.it},
\href{mailto:andrea.giorgini@polimi.it}{andrea.giorgini@polimi.it},
\href{mailto:greta.ricchi@polimi.it}{greta.ricchi@polimi.it}}

\begin{abstract}
We study the initial-boundary value problem for the Cahn-Hilliard equation with degenerate mobility and singular diffusion at pure phases. 
This model describes the dynamics of phase separation in polymer blends with associated Flory-Huggins-de Gennes free energy. 
We prove the existence of global weak solutions in three-dimensional bounded and smooth domains, assuming that the initial datum has finite energy. 
We show the classical regularity in $L^2(0,T;H^2(\Omega))$ for both the solution $u$ and the function $\phi(u)=\arcsin(u)$ through entropy estimates.
In addition, when $\Omega$ is convex, a new elliptic-type argument yields the refined regularity $u, \phi(u) \in L^4(0,T; H^2(\Omega))$.
\end{abstract}

\maketitle 

\setcounter{tocdepth}{1}
\tableofcontents

\section{Introduction and main results}

We consider the Cahn-Hilliard system
 \begin{align}
\partial_t u  &= \div(b(u) \nabla \mu ), \label{A1}\\
\mu&= - \div \left( a(u) \nabla u \right) + \dfrac{a'(u)}{2} |\nabla u |^2 + f(u) -\theta_0 u.\label{A2}
 \end{align} 
The problem \eqref{A1}--\eqref{A2} is posed in $\Omega_T:=\Omega \times (0,T)$, where $\Omega \subset \mathbbm{R}^3$ denotes a bounded domain and $T>0$ is arbitrarily chosen. 
The state variable is $u= u(x,t): \Omega_T \to \mathbbm{R}$, representing the difference of the concentrations of two components. In particular, the solution is subject to the constraint $-1\leq u(x,t) \leq 1$, where the values $\pm 1$ correspond to the pure phases. 

The function $b$ is the Onsager mobility and describes the intensity of diffusion. In this work, we consider a degenerate mobility, which vanishes at the pure phases $u=\pm1$.
More precisely, we assume the physically relevant form
\begin{equation}
\label{b}
 b(u)= 1-u^2.
\end{equation}
The function $a$ is a concentration-dependent coefficient, accounting for nonlinear diffusive effects. In particular, we focus on the ``singular" case
    \begin{equation}
        \label{a}
        a(u)= \dfrac{2}{1-u^2}.
    \end{equation} 
We also introduce the Helmholtz free energy density of the system
    \begin{equation}
        \label{Psi}
        \Psi(u)= F(u)-\dfrac{\theta_0}{2}u^2,
    \end{equation}
	where $F=F(u)$ denotes the Flory-Huggins  entropy of mixing
	\begin{align}
    \label{FL-pot}
		F(u) = \frac{\theta}{2} \bigg((1+u) \ln (1+u) + (1-u) \ln (1-u)\bigg).
	\end{align}
The parameters $\theta$ and $\theta_0$ satisfy the thermodynamic conditions $0< \theta < \theta_0$. We denote by $f$ the first derivative of $F$. 
   
The free energy associated to the system \eqref{A1}--\eqref{A2} is the Flory-Huggins-de Gennes functional
\begin{equation}
\label{FHG}
 E(u) :=\int_{\Omega} \left( \frac{a(u)}{2} |\nabla u|^2 +   \Psi(u) \right) \ \dx.
 \end{equation} 
This is a generalization of the classical Ginzburg-Landau functional, which corresponds to the case $a \equiv 1$. Based on this definition, the function $\mu$ in \eqref{A2} is the chemical potential, defined as the first variational derivative of the free energy $E(u)$.

We equip the system with the following boundary conditions
\begin{equation} \label{boundary conditions}
        \partial_{\mathbf{n}} u=0, \quad b(u)\partial_{\mathbf{n}}\mu = 0 \quad \text{on } \partial \Omega\times (0,T), 
\end{equation}
   where $\mathbf{n}$ is the unit outward normal vector on $\partial \Omega$ and $\partial_{\mathbf{n}}$ denotes the outer normal derivative on $\partial \Omega$,
and the initial condition
\begin{equation}
 \label{initial conditions}
u( \cdot, 0) = u_0(\cdot)  \quad \text{in } \Omega.
\end{equation}  

The system \eqref{A1}--\eqref{A2}, subject to \eqref{boundary conditions}--\eqref{initial conditions}, is characterized by two fundamental physical laws:
any sufficiently smooth solution $u$ satisfies
\begin{enumerate}[i)]
    \item \textit{Mass conservation}:
\begin{equation}
    \label{mass conservation}
    \int_{\Omega} u( \cdot,t) \ \dx = \int_{\Omega} u_{0}(\cdot) \ \dx, \hspace{1.0 cm}\forall \, t \ge 0.
\end{equation}
\item \textit{Energy balance}:
\begin{equation} 
\label{energy balance}
E(u(t)) 
+  \int_0^t \int_\Omega  b(u)|\nabla \mu|^2\ \dx \ds  = 
E(u_0), \hspace{1.0 cm}\forall \, t \ge 0.
\end{equation} 
\end{enumerate}
    
\medskip   
  
Model \eqref{A1}--\eqref{A2} belongs to the class of macroscopic PDEs arising from the Phase Field (or Diffuse Interface) theory, which describes the dynamics of phase separation. In a binary mixture, this phenomenon consists of the segregation from an initially homogeneous (mixed) state, which is thermodynamically unstable, into a demixed pattern characterized by the formation of spatial domains, each predominantly occupied by one of the two components or phases. As the evolution proceeds, these domains merge and grow, giving rise to the coarsening process. 

The classical model \eqref{A1}--\eqref{A2} with $a( \cdot )\equiv 1$ was introduced by Cahn and Hilliard in \cite{CAHN-HILLIARD} (see also \cite{CAHN, HILLIARD}) to study spinodal decomposition and nucleation in alloys or simple liquids. Subsequently, system \eqref{A1}--\eqref{A2} with  singular diffusion $a(u)$ given by \eqref{a} was proposed by de Gennes in \cite{GenJCP} to describe the concentration fluctuations in a binary polymer melts.
In particular, the expression \eqref{a} stems from the assumption that chain connectivity of polymer molecules gives rise to an explicit entropic contribution to the square-gradient coefficient. 

Since then, coarsening dynamics of these systems, namely the time evolution of the characteristic size of the phase domains, has attracted considerable interest; see, for instance, \cite{BINDER, CG1995, CTGM1990, GLOTZER, KO, OE}.

\subsection{Literature review}
The mathematical theory of the Cahn-Hilliard equation \eqref{A1}--\eqref{A2}, supplemented with \eqref{boundary conditions}--\eqref{initial conditions}, strongly depends on the choice of the mobility 
$b$ and the nonlinear diffusion coefficient $a$.

In the classical case of constant mobility and diffusion coefficients (e.g.
$b \equiv 1$ and $a\equiv1$), the theory is by now well established. Global well-posedness of weak solutions, propagation of regularity and related analytical properties have been extensively investigated, for example, in \cite{AbelsARMA,AbWNA,DDNA,EL1991,GGH2018}. Further qualitative properties include the separation property \cite{GGG2023,GP2024,HW2021,MZ2004}, convergence to stationary states  and the existence of global attractors \cite{AbWNA,MZ2004}.

When the mobility is non-degenerate, that is, $b(u)$ is strictly positive in $[-1,1]$, the existence of global weak solutions and the local existence of strong solutions were first established in \cite{BB1999}, while the validity of the energy equality and the existence of global attractors were studied in \cite{S2007}. More recently, uniqueness of weak solutions, global propagation of regularity and convergence to a stationary state were proved in two space dimensions in \cite{CGGG2025}. In three dimensions, the convergence of weak solutions to a stationary state (in the energy norm) was obtained in \cite{GP2025}. 

Another natural generalization consists in considering a variable diffusion coefficient $a(u)$, as already suggested in \cite{CAHN}. Assuming that $a(u)$ is strictly positive and bounded in $[-1,1]$, Schimperna and Paw\l ow in \cite{SP2011} established the existence of global weak solutions. Under an additional convexity assumption on the interfacial energy (see condition (6.1) in \cite[Theorem 6.1]{SP2011}), they also proved uniqueness and global propagation of regularity in three dimensions.
More recently, these results were extended in \cite{CGGS2025}, where the convexity assumption was removed and non-degenerate mobilities were included. In particular, the authors proved uniqueness of weak solutions, global propagation of regularity and convergence to equilibrium in two dimensions, together with the existence of global strong solutions in three dimensions for initial data sufficiently close to energy minimizers (Lyapunov stability). We also mention the recent work \cite{GP2026}, which establishes convergence of weak solutions to a stationary state in three dimensions.

The case of singular diffusion, corresponding to the coefficient $a(u)$ as in \eqref{a}, was investigated by Schimperna and Paw\l ow in \cite{SP2013} 
under the assumption of constant mobility and convex three-dimensional domains. They proved the existence of suitable weak solutions (see \cite[Definition 2.3]{SP2013}), which become classical on every interval $(\tau, T)$, with $0<\tau<T$. Consequently, the equations are satisfied almost everywhere in $\Omega \times (0,T)$, and solutions satisfy the separation property for all positive times. Moreover, they also showed that uniqueness holds in the class of weak solutions that are classical for strictly positive times.

In contrast with the settings discussed above, considerably less is known when the mobility is degenerate. In this case, the only result currently 
available is the existence of global weak solutions. This was first established by Elliott and Garcke \cite{EG1996} for system \eqref{A1}--\eqref{A2}, with mobility given by \eqref{b} (or suitable generalizations) and constant diffusion coefficient $a\equiv 1$. Related existence results were obtained in \cite{YIN} for the one dimensional case and in \cite{DD2016} for the case of smooth double-well potentials $\Psi$. A different approach, based on the interpretation of \eqref{A1}--\eqref{A2} as a gradient flow in weighted Wasserstein spaces, was proposed in \cite{LMS2012}; see also \cite{CMN2019} for a related system. Existence of weak solutions has also been established for a Cahn-Hilliard model coupled with hydrodynamics and endowed with a strictly positive and bounded diffusion coefficient $a(u)$ in \cite{ADG2013-2}.

To the best of our knowledge, the only contribution addressing both degenerate mobility and a singular nonlinear diffusion coefficient is due to Canc{\`e}s and Matthes \cite{CM2023}. There, the authors proved the existence of weak solutions for a nonlocal system of two coupled diffusion equations by means of a gradient-flow approach. We refer the reader to Section 1.1 in \cite{CM2023} for a detailed comparison between that nonlocal model and the local Cahn-Hilliard equation originally proposed by de Gennes in \cite{GenJCP}, which is considered in the present paper.

Finally, we mention that numerical approximation schemes and simulations for the Cahn-Hilliard equation associated with the Flory-Huggins-de Gennes free energy \eqref{FHG} have recently been investigated in \cite{DWWZ2021,DWWZ2022,ZY2020,YANG2016}.

Before presenting the main results of this paper, we briefly discuss the main difficulties arising in the mathematical analysis of the Cahn-Hilliard equation with degenerate mobility. As mentioned above, the currently available theory is limited to the existence of global weak solutions, whose construction requires different arguments compared with the cases of constant or non-degenerate mobility. In particular, the definition of a suitable notion of weak solution, as introduced in \cite{EG1996}, relies on a set of {\it a priori} estimates that can be derived for smooth solutions to \eqref{A1}--\eqref{A2} with $a \equiv 1$:
\begin{itemize}
\item The energy balance \eqref{energy balance} guarantees that the free energy is a Lyapunov functional over the phase space $V=\lbrace u \in H^1(\Omega): \ | u| \leq 1 \text{ a.e. in } \Omega \rbrace$, which translates into a global control of the free energy for all time, provided that the initial free energy $E(u_0)$ is finite.

\item The entropy estimate, namely a differential inequality for the functional
$$
\int_\Omega \Phi(u) \ \dx, \quad \text{where} \quad \Phi^{\prime \prime}(u)= \dfrac{1}{b(u)},
$$
 entails a control of $u$ in $L^2(0,T; H^2(\Omega))$ provided that $\displaystyle \int_\Omega \Phi(u_0) \ \dx $ is finite.
\end{itemize}
These {\it a priori} estimates are sufficient to define weak solutions. Indeed, owing to the relation $b(u) f'(u)= \theta$, the Cahn-Hilliard equation can be equivalently rewritten as 
\begin{equation}
\label{weak-CH}
\partial_t u= - \div \left( b(u) \nabla \Delta u \right) + \theta \Delta u - \theta_0 \div \left( b(u) \nabla u \right)\!,
\end{equation}
which can be interpreted in a variational sense for a function $u\in L^\infty(0,T; H^1(\Omega)) \cap L^2(0,T;H^2(\Omega))$ satisfying $|u(x,t)|\leq 1$ almost everywhere in $\Omega_T$. Furthermore, such estimates are also {\it stable}, namely they hold within a suitable approximation scheme and provide the sufficient compactness to obtain a limit function satisfying such notion of solution. In particular, the approach in \cite{EG1996} follows by approximating the degenerate mobility with positive ones, and the potential $\Psi$ with quadratic polynomials, whereas in \cite{LMS2012} a variational minimizing movement/JKO scheme is employed. 

It is important to emphasize that no {\it a priori} estimates are known for $\Psi'(u)$, and consequently for the chemical potential $\mu$, in any Lebesgue space. Therefore, these quantities do not explicitly appear in the weak formulation of \eqref{weak-CH}. Furthermore, for weak solutions, the flux $b(u)\nabla \mu$ is not defined in the classical sense and must be interpreted through a suitable weak formulation (see the term $\boldsymbol{J}$ in \cite[Theorem 1]{EG1996}). 
 
Finally, the uniqueness of weak solutions, the propagation of regularity, and the characterization of the long-time dynamics remain challenging open problems, even in the case of a constant diffusion coefficient $a$.

\subsection{The main results} 
The aim of this paper is to demonstrate the existence of global weak solutions to system \eqref{A1}--\eqref{A2}, subject to \eqref{boundary conditions}--\eqref{initial conditions}, under the thermodynamically relevant assumptions \eqref{a}--\eqref{FL-pot}.
The main contribution of the paper reads as follows. We refer to Section \ref{Func-set} for the notation.

	\begin{theorem} 
    \label{Main Result}
		Let $T>0$ be chosen arbitrarily and let $\Omega \subset \mathbbm{R}^3$ be a bounded smooth domain. 
		Assume that the initial condition $u_0$ satisfies
        \begin{equation}
            \label{I1}
            u_0 \in H^1(\Omega) \cap L^\infty(\Omega) \quad \text{such that}\quad \|u_0\|_{L^\infty(\Omega)}\le 1,
        \end{equation}
		with 
		\begin{equation}
        \label{mass}
			\overline{u_0}:= \dfrac{1}{|\Omega|}\int_{\Omega} u_0\ \dx \in (-1,1)
		\end{equation}
		 and
		\begin{equation}
		\label{E-0}
 		 E(u_0)=\int_{\Omega} \left( \frac{a(u_0)}{2} |\nabla u_0|^2 +   \Psi(u_0) \right) \ \dx<\infty.
 \end{equation}
		Then, the Cahn-Hilliard system \eqref{A1}--\eqref{A2}, subject to \eqref{boundary conditions}--\eqref{initial conditions}, under the assumptions \eqref{a}--\eqref{FL-pot}, admits a global weak solution $u: \Omega_T  \to \mathbbm{R}$ in the following sense:
		\begin{itemize}
			\item[(i)]  \textbf{Regularity class}: the weak solution $u$ satisfies
            $$-1\le u \le 1 \ \mbox{a.e. in } \Omega_T,$$
            and
            $$u \in L^\infty(0,T;H^1(\Omega))\cap 
             L^2(0,T;H^2(\Omega)) \cap C([0,T]; L^2(\Omega)),$$
            $$\partial_t u  \in L^2(0,T; (H^{1}(\Omega))').$$
			Besides, setting
            $$\phi(s) = \int_0^s \sqrt{\dfrac{a(\tau)}{2}} \mathrm{d}\tau =
            \arcsin (s), \quad \forall \, s \in [-1,1],
            $$
            we have 
            $$ \phi(u) \in L^\infty(0,T;H^1(\Omega))\cap 
            L^2(0,T;H^2(\Omega)).
            $$
            In addition, if $\Omega$ is also convex, then
            $$          
              u\in L^4(0,T;H^2(\Omega))\quad \text{and} \quad \phi(u) \in L^4(0,T;H^2(\Omega)).
            $$
			
			\item[(ii)] \textbf{Weak formulation}: the integral identity
            \begin{equation}
                \label{weak formulation}
                \int_0^T \langle \partial_t u, v \rangle \  \dt = -\int_{\Omega_T} \boldsymbol{J} \cdot \nabla v \ \dx \dt,
            \end{equation}
            holds for any $v \in L^2(0,T;H^1(\Omega))$,
            where $\boldsymbol{J} \in L^2(0,T;L^2(\Omega))$, satisfies 
			\begin{align}
			\int_{\Omega_T} \boldsymbol{J} \cdot \boldsymbol{\eta} \ \dx \dt &= \int_{\Omega_T} \left[2\sqrt{b(u)}\Delta \phi(u) \right]\div\,  \boldsymbol{\eta} \ \dx \dt  \notag 
\\[3pt]
& \quad + \int_{\Omega_T}  \left[ 2 b'(u) \Delta \phi(u)\nabla \phi(u) 
+ (b\Psi'')(u) \nabla u \right] \cdot \boldsymbol{\eta} \ \dx \dt, 
\label{J}
			\end{align}
for all $\boldsymbol{\eta} \in C_c^\infty\left( \overline{\Omega} \times (0,T); \mathbbm{R}^3\right)$ with $\boldsymbol{\eta} \cdot \mathbf{n}=0$ on $\partial \Omega \times(0,T)$, where
$$
(b\Psi'')(s)= \theta - \theta_0 b(s), \quad \forall \, s \in [-1,1].
$$		
Besides, $u(\cdot, 0)=u_0(\cdot)$ almost everywhere in $\Omega$. 

        \vspace{0.05cm}
	
\item[(iii)] \textbf{Energy inequality}: there exists $\widehat{\boldsymbol{J}} \in  L^2(0,T;L^2(\Omega))$ such that $\boldsymbol{J} = \sqrt{b(u)} \widehat{\boldsymbol{J}} $ and
			the following integral inequality
			\begin{equation}
			\label{energy-inequality}
			E(u(t)) 
+  \int_0^t \int_\Omega  |\widehat{\boldsymbol{J}}|^2 \ \dx \ds  \leq
E(u_0)
\end{equation}			 
holds for almost any $t \in (0,T)$.   
		\end{itemize}
	\end{theorem}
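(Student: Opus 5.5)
The plan is to construct solutions via a regularized problem, derive energy and entropy estimates that are uniform in the regularization, and pass to the limit by compactness, following the scheme of \cite{EG1996}. The key reformulation uses $\phi(u)=\arcsin u$. Since $\phi'(u)^2 = a(u)/2$, the gradient part of the energy is $\int_\Omega |\nabla \phi(u)|^2\,\dx$. A direct computation then gives
$$-\div(a(u)\nabla u)+\tfrac{a'(u)}{2}|\nabla u|^2 = -2\phi'(u)\Delta\phi(u),$$
so that $\mu = -\tfrac{2}{\sqrt{b(u)}}\Delta\phi(u) + \Psi'(u)$, using $\phi'(u)=1/\sqrt{b(u)}$. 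Hence
$$b(u)\nabla\mu = -\nabla\big(2\sqrt{b(u)}\,\Delta\phi(u)\big) + 2\big(\nabla\sqrt{b(u)}\big)\Delta\phi(u) + (b\Psi'')(u)\nabla u.$$
Since $\nabla\sqrt{b(u)} = \tfrac{b'(u)}{2\sqrt{b(u)}}\nabla u = \tfrac{b'(u)}{2}\nabla\phi(u)$, this is exactly the structure of \eqref{J}. The weak formulation therefore only requires $\Delta\phi(u)\in L^2(\Omega_T)$ and $\nabla\phi(u)\in L^\infty(0,T;L^2)$, with $b'$ bounded.

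\textbf{Approximation.} I would regularize in three ways. First, replace $b$ by $b_\varepsilon(s)=\max(b(s),\varepsilon)$ (suitably extended). Second, replace $F$ by a $C^2$ convex approximation $F_\delta$ with quadratic growth outside $[-1+\delta,1-\delta]$. Third, replace $a$ by a bounded positive $a_\delta$, chosen so that $a_\delta b_\varepsilon$ stays compatible. A cleaner option is to write the problem directly in the variable $w=\phi_\delta(u)$, so that the diffusion part becomes a standard Laplacian. For each fixed parameter, existence follows from a Faedo--Galerkin scheme using the known theory for non-degenerate mobility and bounded diffusion (\cite{SP2011,CGGS2025}).

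\textbf{Uniform estimates.} The energy balance gives uniform bounds on $E_\delta(u)$, and hence on $\nabla\phi_\delta(u)$ in $L^\infty(L^2)$ and on $\sqrt{b_\varepsilon}\nabla\mu$ in $L^2(L^2)$. Setting $\widehat{\boldsymbol J}=-\sqrt{b_\varepsilon}\nabla\mu$ gives the energy inequality in the limit by weak lower semicontinuity. The central step is the entropy estimate. Test the equation with $\Phi_\varepsilon'(u)$, where $\Phi_\varepsilon''=1/b_\varepsilon$. This yields
$$\frac{d}{dt}\int\Phi_\varepsilon(u) + \int \nabla\mu\cdot\nabla u = 0,$$
and
$$\int\nabla\mu\cdot\nabla u = \int \mu(-\Delta u) = \int \frac{2}{\sqrt b}\Delta\phi\,\Delta u + \int\Psi''|\nabla u|^2.$$
Writing $\Delta u = \Delta\sin\phi = \cos\phi\,\Delta\phi - \sin\phi\,|\nabla\phi|^2$ and $\sqrt b=\cos\phi$, the first term becomes
$$2\int|\Delta\phi|^2 - 2\int\tan\phi\,|\nabla\phi|^2\Delta\phi.$$
The term $\int f'(u)|\nabla u|^2 \ge 0$ has a good sign, and $-\theta_0\int|\nabla u|^2$ is bounded by the energy.

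\textbf{Main obstacle.} The hardest step is the cubic term $\int\tan\phi\,|\nabla\phi|^2\Delta\phi$, which is singular at the pure phases. I would try first to integrate by parts, turning it into $\int \tan\phi\,\nabla\phi\cdot\nabla|\nabla\phi|^2 + \int\sec^2\phi\,|\nabla\phi|^4$. I would then use the Bochner-type identity $\int|D^2\phi|^2 = \int|\Delta\phi|^2 + \text{boundary curvature}$, with boundary curvature $\le 0$ for convex domains and controlled by a trace inequality in general. The aim is to show that the whole quartic combination has a sign, or is bounded by $\tfrac12\|\Delta\phi\|^2 + C\|\nabla u\|^2$, using $\sec^2\phi\,|\nabla\phi|^2 = |\nabla u|^2/b^2$. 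That last quantity is comparable to $f'(u)^2|\nabla u|^2$, which is produced with a good sign by the $f'$ term. Elliptic regularity for the Neumann Laplacian then gives $\phi(u)\in L^2(H^2)$, and $u=\sin\phi(u)$ gives $u\in L^2(H^2)$ via Gagliardo--Nirenberg. The entropy at $t=0$ is finite since $\Phi$ is bounded on $[-1,1]$. The bound $|u|\le1$ comes from the finite limit energy. For the $L^4(H^2)$ refinement on convex domains, I would treat the relation for $\mu$ as an elliptic equation, $-2\Delta\phi = \sqrt b(\mu-\Psi'(u))$. I would then estimate $\|\Delta\phi\|^2$ by $\|\sqrt b\nabla\mu\|\,\|\nabla\phi\|$-type products, giving $\|\phi\|_{H^2}^4\lesssim \|\sqrt b\nabla\mu\|^2+1$, which is integrable.

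\textbf{Passage to the limit.} Finally, Aubin--Lions using $\partial_t u\in L^2((H^1)')$ gives strong convergence of $u$ in $L^2(H^1)$ and a.e. Consequently $\sqrt{b_\varepsilon(u)}\to\sqrt{b(u)}$ strongly and $\Delta\phi_\delta(u)\rightharpoonup\Delta\phi(u)$ weakly. This identifies $\boldsymbol J$ in \eqref{J} as a limit of products of weak and strong convergences.
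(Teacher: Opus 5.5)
There is a genuine gap. Your reformulation via $\phi=\arcsin u$, the energy bounds, the identity $\int_\Omega \tfrac{2}{\sqrt b}\,\Delta\phi\,\Delta u\,\dx=2\int_\Omega|\Delta\phi|^2\,\dx-2\int_\Omega\tan\phi\,|\nabla\phi|^2\Delta\phi\,\dx$, and the overall compactness scheme are all sound. However, the step on which the whole proof rests, a bound on $\Delta\phi$ that is uniform in the regularization, is not established, and the mechanism you propose for it fails.

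Testing with $\Phi_\varepsilon'(u)$ produces the $f'$-term $\int_\Omega f'(u)|\nabla u|^2\,\dx=\theta\int_\Omega |\nabla u|^2/b\,\dx=\theta\int_\Omega|\nabla\phi|^2\,\dx$. It does not produce $\int_\Omega f'(u)^2|\nabla u|^2\,\dx$. This term is quadratic in the gradient and already bounded by the energy. So it cannot absorb the quartic quantity $\int_\Omega \sec^2\phi\,|\nabla\phi|^4\,\dx=\int_\Omega|\nabla u|^4/b^3\,\dx$, nor the cubic term $4\int_\Omega \tan\phi\,D^2\phi\,\nabla\phi\cdot\nabla\phi\,\dx$ that appears after your integration by parts. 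The latter also cannot be bounded by $\tfrac12\|\Delta\phi\|^2+C\|\nabla u\|^2$, because it is of the same order as $\|\Delta\phi\|^2$ with the weight $\tan\phi$, which is not uniformly bounded.

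Bochner plus a pointwise completion of the square does not rescue it either. Suppose you keep $2(1-\lambda)\|\Delta\phi\|^2$ and convert $2\lambda\|\Delta\phi\|^2$ into $2\lambda\|D^2\phi\|^2$. You then need $\lambda|D^2\phi|^2+2\tan\phi\,D^2\phi\,\nabla\phi\cdot\nabla\phi+\sec^2\phi\,|\nabla\phi|^4\ge 0$, which forces $\lambda\ge\sin^2\phi$. Since $\sin^2\phi$ can approach $1$ near the pure phases, the retained coercivity $1-\lambda$ degenerates. Your $L^4(H^2)$ argument has the same hole: to generate $\int_\Omega\sqrt b\,\nabla\mu\cdot\nabla\phi\,\dx$ from the $\mu$-relation you must test with $-\Delta u$, which brings back exactly this cubic term.

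The paper supplies two different ideas here, one for each case.

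\emph{Convex domains.} The paper uses Lemma~\ref{refined-lemma}, after Canc\`es--Matthes. You add $\frac{2}{d+2}$ times the null identity $\int_\Omega \div\big(H'(v)|\nabla v|^2\nabla v\big)\,\dx=0$. You then split $D^2v$ into its trace-free part plus $\frac{\Delta v}{d}\mathbb{I}_d$, and complete the square only in the trace-free part. This retains $\frac1d\|\Delta v\|^2$, and it works for $H=\ln\cos$ because $-H''-\frac35(H')^2=\frac35+\frac25\sec^2>0$. Your trace-inequality treatment of the boundary curvature is compatible with this splitting. It only replaces $|D^2v|^2$ by $(1-\eta)|D^2v|^2$ at the cost of $C_\eta\|\nabla v\|_{L^2}^2$, and $\sec^2-\frac{3/5}{1-\eta}\tan^2>0$ still holds for small $\eta$. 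Without the trace-free splitting, however, it does not help.

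\emph{General domains.} The paper avoids the cubic term altogether by changing the entropy. With $B_\varepsilon''=b_\varepsilon^{-3/2}$ one has $b_\varepsilon B_\varepsilon''=\phi_\varepsilon'$, hence $\frac{d}{dt}\int_\Omega B_\varepsilon(u_\varepsilon)\,\dx=-\int_\Omega\nabla\mu_\varepsilon\cdot\nabla\phi_\varepsilon(u_\varepsilon)\,\dx$. Testing \eqref{newformulation} with $-\Delta\phi_\varepsilon(u_\varepsilon)$ then yields:
\begin{itemize}
\item the clean dissipation $2\int_\Omega\phi_\varepsilon'(u_\varepsilon)|\Delta\phi_\varepsilon(u_\varepsilon)|^2\,\dx$;
\item a nonnegative $f$-term;
\item the term $\theta_0\int_\Omega\nabla u_\varepsilon\cdot\nabla\phi_\varepsilon(u_\varepsilon)\,\dx$, which is bounded by the energy.
\end{itemize}

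A secondary point: identifying $\boldsymbol{J}$ is not merely a weak-times-strong limit, because $b_\varepsilon\Psi''$ is not uniformly bounded. In the paper $b_\varepsilon f'=\theta+\varepsilon\theta/b$, which leaves $\varepsilon\int f(u_\varepsilon)\div\boldsymbol{\eta}$. The paper kills this term with $\|f(u_\varepsilon)\|_{L^2(0,T;L^1(\Omega))}\le C\varepsilon^{-1/2}$, which uses \eqref{mass}. Your scheme with $F_\delta$ and $b_\varepsilon$ would need an analogous estimate or a coupling of $\delta$ and $\varepsilon$.
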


\begin{remark}
A few observations are in order: 

\begin{itemize}
\item The assumption \eqref{mass} ensures that the initial datum is not a pure state (i.e. $u\equiv 1$ or $u \equiv -1$) and it is required to carry out our suitable approximation procedure. On the other hand, we observe that if $\overline{u_0}=1$ (respectively, $\overline{u_0}=-1$), then the global weak solution can be simply $u(t)\equiv1$ (respectively, $u(t)\equiv-1$), namely no phase separation occurs, which complies with the statement provided in Theorem \ref{Main Result}.

\item In Theorem \ref{Main Result}, we establish the existence of global weak solutions on the interval $[0,T]$, where $T>0$ is arbitrary. Since the approximating solutions $u_n$ constructed in Section \ref{Approximation} exist on the whole interval $[0,\infty)$ and satisfy the estimates derived in Section \ref{s-proof-main}, a standard diagonal argument by successive extractions yields the existence of a weak solution on the entire time interval $[0,\infty)$.
\end{itemize}
\end{remark}

Next, we show that every weak solution given by Theorem \ref{Main Result} is also a weak solution in the sense of Canc{\`e}s and Matthes in \cite{CM2023}. More precisely, the following result holds.

\begin{theorem}
\label{formulazione alla Cances}
Let $u$ be a solution in the sense of Theorem \ref{Main Result}. Then, setting
\begin{equation}
    \label{q}
    q:= -2\Delta \phi(u) + \sqrt{b(u)}\Psi'(u),
\end{equation}
the integral identity
    \begin{equation}
        \label{weak formulation Cances}
        \int_0^T \langle \partial_t u, \eta \rangle \  \dt = 
         \int_{\Omega_T} q\left[\sqrt{b(u)} \Delta \eta + b'(u)\nabla \phi(u) \cdot \nabla \eta \right]\dx \dt
    \end{equation}
    holds for any $\eta \in C_c^\infty\left( \overline{\Omega} \times (0,T)\right)$ such that $\nabla_\mathbf{n} \eta=0$ on $\partial \Omega \times (0,T)$. 
\end{theorem}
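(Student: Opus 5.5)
The plan is to test the weak formulation \eqref{weak formulation} of Theorem \ref{Main Result} with $v=\eta$ and then to rewrite $-\int_{\Omega_T}\boldsymbol{J}\cdot\nabla\eta$ using the characterization \eqref{J} with the choice $\boldsymbol{\eta}=-\nabla\eta$. This choice is admissible. Indeed, $\eta\in C_c^\infty(\overline\Omega\times(0,T))$ gives $\nabla\eta\in C_c^\infty(\overline\Omega\times(0,T);\mathbbm{R}^3)$, and the condition $\partial_{\mathbf n}\eta=0$ is exactly $\boldsymbol{\eta}\cdot\mathbf n=0$ on $\partial\Omega\times(0,T)$. Since $\div\boldsymbol{\eta}=-\Delta\eta$, we obtain
\begin{equation*}
\int_0^T\langle\partial_t u,\eta\rangle\,\dt=\int_{\Omega_T}\Big[-2\sqrt{b(u)}\Delta\phi(u)\,\Delta\eta-2b'(u)\Delta\phi(u)\nabla\phi(u)\cdot\nabla\eta+\big(\theta-\theta_0 b(u)\big)\nabla u\cdot\nabla\eta\Big]\dx\dt.
\end{equation*}
The terms containing $\Delta\phi(u)$ already coincide with the contribution of $-2\Delta\phi(u)$ to the right-hand side of \eqref{weak formulation Cances}. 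Since $\sqrt{b(u)}\in L^\infty$, $b'(u)\in L^\infty$, $\Delta\phi(u)\in L^2(\Omega_T)$ and $\nabla\phi(u)\in L^\infty(0,T;L^2)$, every integral is finite. The product $\Delta\phi(u)\nabla\phi(u)\cdot\nabla\eta$ is integrable because $\nabla\eta$ is bounded.

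It remains to prove that
\begin{equation*}
\int_{\Omega_T}\big(\theta-\theta_0 b(u)\big)\nabla u\cdot\nabla\eta\,\dx\dt=\int_{\Omega_T}\sqrt{b(u)}\Psi'(u)\Big[\sqrt{b(u)}\Delta\eta+b'(u)\nabla\phi(u)\cdot\nabla\eta\Big]\dx\dt.
\end{equation*}
Since $\phi'(u)=1/\sqrt{b(u)}$, we have formally $\nabla\phi(u)=\nabla u/\sqrt{b(u)}$. The bracket on the right then becomes $\sqrt{b}\,\Delta\eta+\frac{b'}{\sqrt b}\nabla u\cdot\nabla\eta$, and the right-hand side equals $\int b\Psi'(u)\Delta\eta+\int\Psi'(u)b'(u)\nabla u\cdot\nabla\eta$. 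Integrating the first term by parts (no boundary term, since $\partial_{\mathbf n}\eta=0$) gives $-\int\nabla(b\Psi'(u))\cdot\nabla\eta=-\int(b'\Psi'+b\Psi'')\nabla u\cdot\nabla\eta$. The $b'\Psi'$ contributions cancel, leaving $-\int (b\Psi'')(u)\nabla u\cdot\nabla\eta$.

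This is a sign mismatch, so the bookkeeping has to be rechecked. Writing $q=-2\Delta\phi+\sqrt b\Psi'$, both pieces must enter \eqref{weak formulation Cances} with the same structure. The $\Delta\phi$ part gives $-\int 2\sqrt b\Delta\phi\Delta\eta-\int 2b'\Delta\phi\nabla\phi\cdot\nabla\eta$, which matches the \eqref{J}-derived terms. So the potential part must produce $+\int(b\Psi'')\nabla u\cdot\nabla\eta$, whereas integration by parts gives $-\int(b\Psi'')\nabla u\cdot\nabla\eta$. In the final write-up I will carefully fix the sign conventions in \eqref{J} and \eqref{weak formulation}, since $\boldsymbol J$ represents $-b\nabla\mu$ or $b\nabla\mu$ depending on convention. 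The algebraic identity $\nabla(b\Psi')=(b'\Psi'+b\Psi'')\nabla u$ together with the integration by parts is the whole content of this step, and it will close once the conventions are made consistent.

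The main obstacle is justifying this formal computation rigorously. Two things are delicate: the function $b\Psi'(u)=b(u)\big(\frac\theta2\ln\frac{1+u}{1-u}-\theta_0u\big)$ at pure phases, and the identity $\nabla\phi(u)=\nabla u/\sqrt{b(u)}$ where $|u|=1$. The plan is as follows.

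\begin{enumerate}[(a)]
\item Note that $g(s):=b(s)\Psi'(s)$ extends to a Lipschitz function on $[-1,1]$. It is continuous with $g(\pm1)=0$, and $g'=b'\Psi'+b\Psi''=-2s\Psi'(s)+\theta-\theta_0 b$. This derivative is only log-divergent, which is a problem, so instead I will use that $g\in W^{1,p}$-compatible: $g(u)\in L^2(0,T;H^1)$ via the chain rule for $u\in H^1$ with $g$ absolutely continuous and $g'(u)\nabla u\in L^2$.
\item Obtain $g'(u)\nabla u\in L^2$ by writing $\Psi'(u)b'(u)\nabla u=\Psi'(u)b'(u)\sqrt{b(u)}\nabla\phi(u)$, where $\sqrt b\,\Psi'$ is bounded and $\nabla\phi(u)\in L^2$. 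This same product structure shows that the right-hand side of \eqref{weak formulation Cances} is well defined.
\item On the set $\{|u|=1\}$, both $\nabla u$ and $\nabla\phi(u)$ vanish a.e. by Stampacchia's theorem applied to $u$ and to $\phi(u)\in H^1$. This justifies $\sqrt{b}\nabla\phi(u)=\nabla u$ a.e.
\item Apply the chain rule rigorously by truncating $g$ near $\pm1$: set $g_\delta(s)=g(\max(-1+\delta,\min(s,1-\delta)))$, integrate by parts, and pass to the limit $\delta\to0$ by dominated convergence.
\end{enumerate}
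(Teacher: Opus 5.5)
Your strategy is the paper's: test \eqref{weak formulation} with $v=\eta$, insert \eqref{J} with a gradient field, split $q$, and recover the $(b\Psi'')(u)\nabla u$ term by a truncated chain rule. As written, however, the proposal does not close, and the obstruction is a sign slip in your first display, not an inconsistency of conventions. With $\boldsymbol{\eta}=-\nabla\eta$ you have $\int_0^T\langle\partial_t u,\eta\rangle\,\dt=\int_{\Omega_T}\boldsymbol{J}\cdot\boldsymbol{\eta}\,\dx\dt$, so all three terms of \eqref{J} change sign. You flipped the two $\Delta\phi(u)$ terms but not the last one, which must read $-(\theta-\theta_0 b(u))\nabla u\cdot\nabla\eta$. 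Hence the identity you say ``remains to prove'' is false in general. What is actually needed is
\begin{equation*}
-\int_{\Omega_T}\big(\theta-\theta_0 b(u)\big)\nabla u\cdot\nabla\eta\,\dx\dt=\int_{\Omega_T}\sqrt{b(u)}\Psi'(u)\big[\sqrt{b(u)}\Delta\eta+b'(u)\nabla\phi(u)\cdot\nabla\eta\big]\dx\dt,
\end{equation*}
which is precisely what your integration by parts yields. The remedy you propose, adjusting the conventions in \eqref{J} and \eqref{weak formulation}, is not allowed, since it would change the notion of solution in the statement. It is also not needed. In both relations $\boldsymbol{J}$ is the weak limit of $b_\ve(u_\ve)\nabla\mu_\ve$, and formally $\int\partial_t u\,\eta=-\int b(u)\nabla\mu\cdot\nabla\eta=\int\mu\div(b(u)\nabla\eta)$, which is \eqref{weak formulation Cances} with $q=\sqrt{b(u)}\mu$.

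With the sign corrected, your steps (a)--(d) give a rigorous proof that differs from the paper's only in bookkeeping. The paper writes $\sqrt{b}\Psi'=(\sqrt{b}f)-\theta_0\sqrt{b}\,u$ and treats the singular part through $G(s)=\int_0^s b'f=b(s)f(s)-\theta s$. It proves \eqref{nabla G} with the same truncation $h_n$ and the bound $|\nabla\phi(h_n(u))|\le|\nabla\phi(u)|$, and handles the $\theta_0$ terms by a direct integration by parts. You truncate $g=b\Psi'$ all at once, with dominating function $C(|\nabla\phi(u)|+|\nabla u|)$, which is equally valid.

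Two cleanups are needed. First, discard the claims in (a) that $g$ is Lipschitz and that the chain rule holds for a merely absolutely continuous $g$. Step (d) is what justifies the chain rule, and it gives $g(u)\in L^2(0,T;H^1(\Omega))$ as a by-product. Second, (c) does not address $\nabla u=\sqrt{b(u)}\nabla\phi(u)$ on $\{|u|<1\}$. The simplest argument is that $u=\sin(\phi(u))$ with $\phi(u)\in L^\infty(0,T;H^1(\Omega))$ and $\sin$ smooth, so $\nabla u=\cos(\phi(u))\nabla\phi(u)=\sqrt{b(u)}\nabla\phi(u)$ a.e.\ in $\Omega_T$. This has the side benefit of holding for every solution in the sense of Theorem \ref{Main Result}, whereas the paper recalls \eqref{nabla phiu- ae} from the approximation procedure.
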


\subsection{Key ideas}  To handle the singular diffusion term in \eqref{A1}--\eqref{A2}, we introduce the auxiliary variable
$$
\phi(u)=\arcsin (u).
$$
This choice is naturally motivated by the free-energy density \eqref{FHG}, since, at least formally,
$$
\frac{a(u)}{2} |\nabla u|^2 = \frac{1}{1-u^2} |\nabla u|^2 =
|\nabla \phi(u)|^2.
$$
In addition, \eqref{A2} can be rewritten as
\begin{equation}
\label{mu-phi}
\mu= - 2 \phi'(u) \Delta \phi(u) + f(u) -\theta_0 u.
\end{equation}
By analogy with the classical Cahn-Hilliard equation with degenerate mobility (i.e., when $a(u)\equiv 1$), one cannot expect the chemical potential $\mu$ to be well defined by \eqref{mu-phi}. Therefore, the degeneracy of the  mobility must be exploited to compensate the singular terms in \eqref{mu-phi}.
To this end, a key observation underlying our analysis is that the degenerate mobility $b$ is related both to the free-energy density $F$ and to the function $a$ through the key identities
\begin{equation}
\label{key relation}
      a(u)b(u)=2, \quad b(u) f'(u)= \theta.
\end{equation} 
We now summarize the main ideas of our proofs:
\begin{itemize}

\item[1.] We introduce a suitable notion of weak solution (see \eqref{weak formulation}--\eqref{J}). This notion is inspired by the work by Elliott and Garcke \cite{EG1996}. To motivate it, let $u$ be a smooth solution to \eqref{A1}--\eqref{A2} and let $\boldsymbol{\eta} \in C_c^\infty\left( \overline{\Omega} \times (0,T); \mathbbm{R}^3\right)$ be a test function with $\boldsymbol{\eta} \cdot \mathbf{n}=0$ on $\partial \Omega \times(0,T)$.
While the weak formulation \eqref{weak formulation} is standard, the variational formulation of the flux $\boldsymbol{J}$ arises from \eqref{mu-phi} and the following formal integration by parts: 
\begin{align}
\int_{\Omega_T} b(u) \nabla \mu \cdot \boldsymbol{\eta}  \ \dx\dt
&= \int_{\Omega_T}  \nabla \left( b(u)\mu\right) \cdot \boldsymbol{\eta}  \ \dx\dt
-\int_{\Omega_T}  \mu \nabla b(u) \cdot \boldsymbol{\eta}  \ \dx\dt
\notag
\\
&=
- \int_{\Omega_T}  b(u)\mu  \div \, \boldsymbol{\eta}  \ \dx\dt
-\int_{\Omega_T}  \mu b'(u) \nabla u \cdot \boldsymbol{\eta}  \ \dx\dt
\notag
\\
&=
 \int_{\Omega_T}  \left( 2 b(u) \phi'(u) \Delta \phi(u) - b(u)\Psi'(u) \right)  \div \, \boldsymbol{\eta}  \ \dx\dt 
 \notag
 \\
 &\quad
 +\int_{\Omega_T}  \left(  2 \phi'(u) \Delta \phi(u) b'(u) \nabla u 
 - b'(u) \Psi'(u) \nabla u \right)  \cdot \boldsymbol{\eta}  \ \dx\dt 
 \notag
 \\
 &=
 \int_{\Omega_T}  2 \sqrt{b(u)} \Delta \phi(u)   \div \, \boldsymbol{\eta}  \, \dx\dt
 +\int_{\Omega_T}   2 \Delta \phi(u) b'(u) \nabla \phi(u) \cdot \boldsymbol{\eta} \ \dx\dt  
 \notag
 \\
 &\quad
 + \int_{\Omega_T} b(u) \Psi''(u) \nabla u  \cdot \boldsymbol{\eta}  \ \dx\dt.
 \notag
\end{align}

\item[2.] We construct an approximation scheme 
by replacing the degenerate mobility $b$ by a sequence of strictly positive mobilities $b_\ve$, and the singular diffusion coefficient $a$ with a sequence of regularized coefficients $a_\ve$. This is achieved in \eqref{a-b-def} in such a way that the original relation  \eqref{key relation} is preserved for $b_\ve$ and $a_\ve$. A distinctive feature is that the potential $\Psi$ keeps the original form \eqref{Psi}--\eqref{FL-pot}. This allows us to apply \cite[Theorem 1.3]{CGGS2025} to obtain a family of approximating global weak solutions $u_\ve$ to the regularized problem, where $b$ and $a$ are replaced by $b_\ve$ and $a_\ve$, respectively. In particular, the constructed solutions $u_{\ve}$ satisfy the physical constraint $|u_{\ve}|<1$ almost everywhere in $\Omega_T$ (unlike the approximation considered in \cite{EG1996}). The latter immediately entails that the limit function $u$ satisfies the physically relevant condition
$$
 -1 \leq u \leq 1  \quad \mbox{a.e. in } \Omega_T.
 $$ 

\item[3.] The crucial part of the proof consists in deriving uniform bounds for the approximating solutions $u_\ve$ as the parameter $\varepsilon \to 0$. 
First, as a consequence of the energy inequality for $u_\ve$ (cf. \eqref{energy inequality}), we obtain a uniform bound of $\nabla \phi_\ve (u_\ve)$ in $L^\infty(0,T; L^2(\Omega))$ as well as of $\sqrt{b_\ve(u_\ve)} \nabla \mu_\ve$ in $L^2(0,T; L^2(\Omega))$. We point out that the former provides a richer information than a uniform bound of $\nabla u_\ve$ in $L^\infty(0,T; L^2(\Omega))$, available in the case $a\equiv 1$.
However, these estimates are not sufficient to obtain the compactness required to identify the limit of the terms involving the singular diffusion coefficient.

The core argument is therefore the derivation of a uniform estimate of $\Delta \phi_\ve(u_\ve)$, which in turn yields a control on $\Delta u_\ve$.
We first consider the case where the domain $\Omega$ is convex. In this setting, our strategy relies on an elliptic-type estimate that allows us to fully exploit the first energy bounds, in particular the uniform control of $\nabla\phi_\ve(u_\ve)$ in $L^\infty(0,T;L^2(\Omega))$. Starting from relation \eqref{SE 1}, the key point is to extract from the first term on the left-hand side a coercive contribution controlling $\|\Delta\phi_\ve(u_\ve)\|_{L^2(\Omega)}^2$. To this end, we employ a general result (see Lemma \ref{refined-lemma}) inspired by the work of Canc{\`e}s and Matthes in \cite{CM2023}, for which the convexity of $\Omega$ plays a crucial role. This eventually yields uniform bounds for both $\phi_\ve(u_\ve)$ and $u_\ve$ in $L^4(0,T;H^2(\Omega))$.

 We then turn to general domains, for which the convexity-based argument is no longer available. We therefore resort to the so-called entropy method (see \cite{ADG2013-2, CM2023, EG1996,LMS2012}) and we perform an entropy-like estimate that provides a uniform control of
  $$
  \sqrt{\phi_\ve'(u_\ve)}\,\Delta\phi_\ve(u_\ve)
  \quad\text{in }L^2(Q_T),
  $$
 which allows to recover the uniform control of both $\phi_\ve(u_\ve)$ and $u_\ve$ in $L^2(0,T;H^2(\Omega))$. This approach applies without any convexity assumption on $\Omega$, at the price of a weaker time-integrability estimate. Nonetheless, such uniform estimates of the second-order spatial derivatives are still sufficient to provide the
suitable compactness framework for the construction of weak solutions.

We highlight that, in comparison with the existing
literature on free energy with singular diffusion \cite{CM2023, SP2013}, this is the first existence result that
does not require the convexity of the domain.

\item[4.]  The final step in the existence proof consists in passing to the limit as $\ve \to 0$ and showing that the limit function $u$ is a weak solution in the sense of Theorem \ref{Main Result}.
In this step, due to the particular choice of $b_\varepsilon$ (cf. \eqref{a-b-def} and \eqref{Cancellazione}), we need to prove that the integral (see \eqref{limitequi})
\begin{equation}
\label{to0}
\varepsilon \int_{\Omega_T} f(u_\ve) \, \div \boldsymbol{\eta} \ \dx \dt
\quad \text{ converges to } 0.
\end{equation}
To this end, we establish a bound for $f(u_\ve)$ by adapting the classic tool from \cite{MZ2004} and keeping track of the dependence on the approximating parameter. The resulting estimate is given in \eqref{f L1} below, which implies that \eqref{to0} holds.

\item[5.] Our final result establishes that every solution in the sense of Theorem \ref{Main Result} also satisfies the weak formulation \eqref{weak formulation Cances}, inspired by that introduced by Cancès and Matthes in \cite{CM2023}. In this formulation, the flux $\boldsymbol{J}$ is replaced by the weighted chemical potential $q$, formally defined as $\sqrt{b(u)}\mu$. A key advantage of this approach is that $q$ is defined almost everywhere through the identity \eqref{q}.
The proof relies on a characterization of the gradient of $G(u)$ (see \eqref{nabla G}), where $G$ denotes the antiderivative of $b'(s)f(s)$.
\end{itemize}

We expect that the techniques developed in this work, in particular the new estimates providing control of the second-order spatial derivatives of $u$ and $\phi(u)$, can be adapted to a broader class of mobility functions, including $b(u)=(1-u^2)^m$ with $m>1$, which have been considered in \cite{EG1996, LMS2012}. In addition, it would be interesting to extend the present analysis to Navier-Stokes/Cahn-Hilliard systems and to multicomponent versions of \eqref{A1}--\eqref{A2}. We plan to address these problems in future works.

\section{Functional setting}
\label{Func-set}
Let $X$ be a real Banach space. The set $X'$ denotes its dual space and $\langle\cdot, \cdot \rangle_{X',X}$ the duality product. For $p\in[1,\infty]$ and an interval $I \subseteq [0,\infty)$, the Lebesgue space $L^p(I;X)$ is the set of all strongly measurable functions $p$-integrable/essentially bounded from $I$ to $X$. The space $L^p_{\rm uloc}([0,\infty); X)$ consists of all functions $f \in L^p(0,T;X)$ for any $T>0$ such that
$$\|f\|_{L^p_{\rm uloc}([0,\infty); X)}:= \sup_{t\ge 0} \left( \int_t^{t+1} 
\|f(s)\|^p_X \ds \right)^\frac{1}{p} <\infty.$$
The set of continuous functions $f: [0,T]\to X$ is denoted by $C([0,T];X)$, endowed with the supremum norm. 

Let $\Omega $ be a bounded domain in $\mathbbm{R}^{3}$. 
The Sobolev spaces of functions $f:\Omega \rightarrow \mathbbm{R}$ in $L^p(\Omega)$, with distributional derivative of order up to $k$ in $L^p(\Omega)$, are denoted by $W^{k,p}(\Omega )$ where $k\in \mathbbm{N}$ and $1\leq p\leq \infty $. The notation $\Vert \cdot \Vert
_{W^{k,p}(\Omega )}$ represents its norm. In particular, we use the notation $H^k(\Omega)$ for the Hilbert space with $p=2$. For simplicity, we will denote by $\langle\cdot, \cdot \rangle$ the duality product between $(H^1(\Omega))'$ and $H^1(\Omega)$.

We recall the classical Poincar\'{e}-Wirtinger inequality in bounded Lipschitz domains $\Omega \subset \mathbbm{R}^3$
\begin{equation}
    \label{PI}
    \|f \|_{L^2(\Omega)} \le C \left( \|\nabla f\|_{L^2(\Omega)} + |\overline{f}| \right)\!, \quad \forall \, f \in H^1(\Omega),
\end{equation}
where $C=C(\Omega)$ is positive constant and $\overline{f}$ denotes the total mass (spatial average) of an integrable function defined as
$$
\overline{f}= \frac{1}{|\Omega|} \int_\Omega f \ \dx, \quad \forall \, f \in L^1(\Omega).
$$
We will also use the following Gagliardo-Nirenberg interpolation inequalities in Sobolev spaces 
\begin{equation}
    \label{GNI L4}
    \|\nabla f \|_{L^4(\Omega)}\le C \|f\|_{L^\infty(\Omega)}^{\frac12}\|f\|_{H^2(\Omega)}^{\frac12}, \quad \forall \, f \in H^2(\Omega)
\end{equation}
and
\begin{equation}
    \label{GNI L3}
    \|\nabla f \|_{L^3(\Omega)}\le C \|f\|_{L^6(\Omega)}^{\frac12}\|f\|_{H^2(\Omega)}^{\frac12},  \quad \forall \, f \in H^2(\Omega).
\end{equation} 
Besides, we report the following version of \cite[Lemma 5.1]{GST2009}.
\begin{lemma}
\label{lemma H2}
    Let $\Omega$ be a smooth convex set. Assume that
    $f \in W^{2,2}(\Omega)$ with $\partial_{\mathbf{n}} f=0$ on $\partial \Omega$. Then
    $$
    \int_\Omega |\Delta f|^2 \ \dx \ge \int_\Omega |D^2 f |_F^2 \ \dx,
    $$
where $D^2 f$ is the Hessian of $f$ and $|A|_F= \sqrt{\textrm{tr}(A^TA)}$ is the Frobenius norm of a square matrix $A$. 
\end{lemma}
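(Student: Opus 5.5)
The plan is to use the classical Bochner/Reilly identity, first for smooth functions and then by density. First let $f \in C^\infty(\overline{\Omega})$ (or $H^3(\Omega)$) with $\partial_{\mathbf{n}} f = 0$ on $\partial\Omega$. Integrating by parts twice gives
\begin{equation*}
\int_\Omega |\Delta f|^2 \, \dx = \int_\Omega |D^2 f|_F^2 \, \dx + \int_{\partial\Omega} \left( \Delta f \, \partial_{\mathbf{n}} f - \nabla f \cdot \nabla(\partial_{\mathbf{n}} f)\right) \mathrm{d}\sigma ,
\end{equation*}
up to the precise form of the boundary term. Concretely, write $\int_\Omega \Delta f\,\Delta f = \sum_{i,j}\int_\Omega \partial_{ii} f\,\partial_{jj} f$, then move one $\partial_i$ onto $\partial_{jj}f$ and afterwards one $\partial_j$ back, keeping track of the two boundary integrals.

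Next comes the boundary term. The first piece vanishes because $\partial_{\mathbf{n}} f=0$. For the second, write $\partial_{\mathbf{n}} f = \nabla f\cdot \mathbf{n}$ with $\mathbf{n}$ extended smoothly near $\partial\Omega$. Since $\nabla f$ is tangential on the boundary, the tangential derivative of $\nabla f\cdot\mathbf{n}$ in the direction $\nabla f$ vanishes. This yields the standard identity
\begin{equation*}
(D^2 f\,\nabla f)\cdot \mathbf{n} = -\,\mathrm{II}(\nabla f,\nabla f) \quad \text{on } \partial\Omega ,
\end{equation*}
where $\mathrm{II}$ is the second fundamental form of $\partial\Omega$, with the sign convention that it is nonnegative for convex domains. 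Carrying this through, the boundary contribution equals $\int_{\partial\Omega}\mathrm{II}(\nabla f,\nabla f)\,\mathrm{d}\sigma \ge 0$ by convexity, which gives the inequality for smooth $f$. I expect the main delicate point to be this geometric computation, in particular getting the sign right and handling the non-tangential part of $\nabla(\nabla f\cdot \mathbf n)$.

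Finally, I would extend to $f\in W^{2,2}(\Omega)$ with $\partial_{\mathbf n} f=0$ by density. Let $g=\Delta f\in L^2$. Approximate $g-\overline{g}$ by smooth functions $g_k$ with zero mean, and solve the Neumann problems $\Delta f_k=g_k$ with $\overline{f_k}=\overline{f}$. Since $\Omega$ is smooth, elliptic regularity makes $f_k$ smooth up to the boundary. The $H^2$ Neumann estimate then gives $f_k\to f$ in $H^2(\Omega)$, so both sides of the inequality pass to the limit.
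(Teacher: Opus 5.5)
The paper gives no proof of this lemma; it imports it from \cite[Lemma 5.1]{GST2009}. Your argument is the classical proof behind such results, and it is correct. It has three steps: the Reilly/Grisvard boundary identity for smooth $f$, nonnegativity of the second fundamental form of a smooth convex boundary, and density through the Neumann problem. The density step also works. Since $\int_\Omega \Delta f\,\mathrm{d}x=\int_{\partial\Omega}\partial_{\mathbf n} f\,\mathrm{d}\sigma=0$, $f$ is itself the weak solution of the Neumann problem with datum $\Delta f$ and mean $\overline f$. The $H^2$ estimate applied to $f-f_k$ then gives $f_k\to f$ in $H^2(\Omega)$. Compared with the bare citation, your route is self-contained and yields the sharper identity
$\int_\Omega|\Delta f|^2\,\mathrm{d}x=\int_\Omega|D^2 f|_F^2\,\mathrm{d}x+\int_{\partial\Omega}\mathrm{II}(\nabla f,\nabla f)\,\mathrm{d}\sigma$.

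There is one slip to fix: the boundary term in your first display is wrong. The two integrations by parts you describe actually give
\begin{equation*}
\int_\Omega |\Delta f|^2 \,\mathrm{d}x = \int_\Omega |D^2 f|_F^2 \,\mathrm{d}x + \int_{\partial\Omega}\Big(\Delta f\,\partial_{\mathbf n} f - (D^2 f\,\nabla f)\cdot\mathbf{n}\Big)\,\mathrm{d}\sigma .
\end{equation*}
The quantity $(D^2 f\,\nabla f)\cdot\mathbf n$ is not $\nabla f\cdot\nabla(\partial_{\mathbf n} f)$. With $\mathbf n$ extended, one has $\nabla f\cdot\nabla(\nabla f\cdot\mathbf n)=(D^2 f\,\nabla f)\cdot\mathbf n+\mathrm{II}(\nabla f,\nabla f)$, so the two differ exactly by the curvature term. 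Under $\partial_{\mathbf n}f=0$ the left-hand side vanishes, as you yourself observe. Your display, read literally, would therefore give the equality $\int_\Omega|\Delta f|^2=\int_\Omega|D^2 f|_F^2$, which is false in general. For example, on the unit disc with $f=x_1(3-|x|^2)$ one has $\partial_{\mathbf n}f=0$ but $\int_\Omega\big(|\Delta f|^2-|D^2 f|_F^2\big)\,\mathrm{d}x=4\pi$. Your second paragraph works with the correct quantity $(D^2 f\,\nabla f)\cdot\mathbf n$, so once the display is corrected the proof goes through.
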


In the sequel, we set $\Omega_T= \Omega \times (0,T)$ and we denote with
\begin{align*}
    & f_\ve \to f \quad \mbox{a.e. in } \Omega_T \mbox{ the almost everywhere convergence in } \Omega_T \mbox{ of a sequence }\{f_\ve\}_\ve \mbox{ to } f \mbox{ as } \ve \to 0,
    \\[3pt]
    & f_\ve \to f \quad \mbox{in }X \mbox{ the strong convergence of a sequence }\{f_\ve\}_\ve \mbox{ to } f \mbox{ in } X \mbox{ as } \ve \to 0,\\[3pt]
     & f_\ve \rightharpoonup f \quad \mbox{in }X \mbox{ the weak convergence of a sequence }\{f_\ve\}_\ve \mbox{ to } f \mbox{ in } X \mbox{ as } \ve \to 0,\\[3pt]
      & f_\ve  \overset{\star}{\rightharpoonup} f \quad \mbox{in }X \mbox{ the weak-$\star$ convergence of a sequence }\{f_\ve\}_\ve \mbox{ to } f \mbox{ in } X \mbox{ as } \ve \to 0.
\end{align*}
Lastly, we report a standard convergence result, that is an application of the Vitali's lemma (see, e.g., \cite{E1990}). 
\begin{lemma}\label{convergenza alla Vitali}
 Let $\Omega \subset \mathbbm{R}^3$ be bounded and $f_n: \Omega_T  \to \mathbbm{R}$ be a sequence in $L^1( \Omega_T )$. Suppose 
    \begin{enumerate}
        \item $f_n \to f$ a.e. in $\Omega_T$,
        \item the sequence $f_n$ is bounded in $L^p( \Omega_T )$ for some $p>1$.
    \end{enumerate}
    Then,
    $$
    f_n \to f \quad \mbox{in }L^r( \Omega_T), \quad \text{ for all } \, 1 \le r <p.
    $$
\end{lemma}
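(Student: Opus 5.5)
The statement to prove is Lemma \ref{convergenza alla Vitali}. The plan is to combine uniform integrability, which follows from the $L^p$ bound, with Vitali's convergence theorem on the finite measure space $\Omega_T$. Note that $|\Omega_T| = |\Omega|\,T<\infty$ because $\Omega$ is bounded.

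\textbf{Step 1: the limit lies in $L^p$.} Let $M:=\sup_n \|f_n\|_{L^p(\Omega_T)}$. Since $f_n\to f$ a.e., Fatou's lemma gives $\|f\|_{L^p(\Omega_T)}\le \liminf_n \|f_n\|_{L^p(\Omega_T)}\le M$. The same bound holds when $p=\infty$, since the a.e.\ limit of functions bounded by $M$ is bounded by $M$. Hence $f\in L^p(\Omega_T)$, and since $\Omega_T$ has finite measure, also $f\in L^r(\Omega_T)$ for every $r\le p$.

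\textbf{Step 2: reduction to uniform integrability.} Fix $1\le r<p$ and set $g_n:=|f_n-f|^r$. Then $g_n\to 0$ a.e., and $\|g_n\|_{L^{p/r}(\Omega_T)} = \|f_n-f\|_{L^p}^r \le (2M)^r$ with $p/r>1$. For any measurable $A\subset\Omega_T$, Hölder's inequality gives
\[
\int_A g_n \ \dx\dt \le \|g_n\|_{L^{p/r}(\Omega_T)}\, |A|^{1-r/p} \le (2M)^r |A|^{1-r/p}.
\]
So the family $\{g_n\}$ is uniformly integrable, uniformly in $n$.

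\textbf{Step 3: Egorov's theorem.} Given $\delta>0$, Egorov's theorem (valid because $|\Omega_T|<\infty$) provides a set $A$ with $|A|<\delta$ such that $g_n\to0$ uniformly on $\Omega_T\setminus A$. Then
\[
\limsup_n \int_{\Omega_T} g_n \le \limsup_n \big(|\Omega_T|\sup_{\Omega_T\setminus A} g_n\big) + (2M)^r\delta^{1-r/p} = (2M)^r\delta^{1-r/p}.
\]
Letting $\delta\to0$ yields $\|f_n-f\|_{L^r(\Omega_T)}\to0$. This is exactly Vitali's theorem, so one may alternatively just cite \cite{E1990}.

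There is no real obstacle in this argument. The two points that need care are the strict inequality $r<p$, which provides the positive exponent $1-r/p$ in the Hölder estimate, and the finiteness of $|\Omega_T|$, which is used both in Egorov's theorem and in Step 3.
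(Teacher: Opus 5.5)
Your argument is correct and takes essentially the paper's route. The paper gives no proof and simply invokes Vitali's convergence theorem, and your Fatou--H\"older--Egorov argument is the standard proof of that theorem on the finite-measure space $\Omega_T$, with the roles of $r<p$ (which gives the exponent $1-r/p>0$) and of $|\Omega_T|<\infty$ correctly identified.
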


\section{Approximation scheme}
\label{Approximation}
In this section, we introduce the approximation scheme employed to prove Theorem \ref{Main Result}. For any given parameter $\varepsilon>0$, we approximate the degenerate mobility $b$ by a sequence of strictly positive functions $\lbrace b_\ve \rbrace$. We then approximate the singular coefficient $a$ by a corresponding sequence $\lbrace a_\ve \rbrace$, defined through the fundamental relation between $a$ and $b$ given by \eqref{key relation}.
    
 Next, we exploit the result proved in \cite{CGGS2025} (see also \cite{SP2011}), which guarantees the existence of weak solutions to system \eqref{A1}--\eqref{A2} subject to \eqref{boundary conditions}--\eqref{initial conditions}, with $b$ and $a$ replaced by $b_\ve$ and $a_\ve$, respectively. 
	
\subsection{Approximation of mobility and diffusion functions}
For any $0<\ve<1 $, we consider the following approximations of the degenerate mobility $b$ and the singular diffusion coefficient $a$
\begin{equation}
\label{a-b-def}
b_{\ve}(s) = 1-s^2+\ve, 
\qquad
a_{\ve}(s) = \dfrac{2}{b_{\ve}(s)}=\dfrac{2}{1-s^2+\ve}, \qquad \forall \, s \in [-1,1].
\end{equation}
Clearly, for any $\ve \in (0,1)$, $b_{\ve}: [-1,1]\rightarrow (0,\infty)$ satisfies
$$
b_{\ve} \in C^1([-1,1]): \quad0<\ve \le b_{\ve}(s) \leq 2, \quad \forall \, s \in [-1,1],
$$
and $a_{\ve}:[-1,1] \rightarrow(0,\infty)$ is such that
$$
a_{\ve} \in C^2([-1,1]): \quad 0< 1 \le a_{\varepsilon}(s) \le \frac{2}{\varepsilon},\quad  \forall \, s \in [-1,1].
$$

\subsection{Existence of approximating solutions}
For any fixed $\ve \in (0,1)$, \cite[Theorem 1.3]{CGGS2025} provides the existence of weak solutions for any finite energy initial data; more precisely, the following result holds. 
	
	\begin{theorem} \label{Ex Weak App}
		Let $\Omega \subset \mathbbm{R}^3$ be a bounded smooth domain. Let $u_0$ satisfy conditions \eqref{I1}--\eqref{E-0}. 
		Then, for any fixed $\varepsilon \in (0,1)$, the Cahn-Hilliard system \eqref{A1}--\eqref{boundary conditions} with $b$ and $a$ replaced by $b_{\ve}$ and $a_{\ve}$, respectively,
		admits a global weak solution $u_{\ve}: \Omega_T \to \mathbbm{R}$ in the following sense:
		\begin{itemize}
			\item[(i)]  \textbf{Regularity and weak formulation}: 
			\begin{subequations} \label{B1}
				\begin{align}
					&u_{\ve} \in L^\infty(0,\infty; H^1(\Omega))\cap L^4_{\rm uloc}([0,\infty); H^2(\Omega))\cap L^2_{\rm uloc}([0,\infty); W^{2,6}(\Omega)), \label{B1.1}\\
					&u_{\ve} \in L^\infty(\Omega \times (0,\infty)) \textrm{ such that } |u_{\ve}(x,t)|<1 \textrm{ a.e. in }\Omega \times (0,\infty),  \label{B1.2}\\
					 &\partial _t u_{\ve} \in L^2(0,\infty; (H^{1}(\Omega))'),\\
                     &\mu_{\ve} \in L^2_{\rm uloc}([0,\infty); H^{1}(\Omega)), \hspace{1.0 cm}f(u_{\ve}) \in L^2_{\rm uloc}([0,\infty); L^{6}(\Omega)),
                     \label{B1.3}
				\end{align}
			\end{subequations}
 and the following relation holds
           \begin{equation}
               \label{ueps}
               \langle \partial_t u_{\ve},v \rangle + (b_{\ve}(u_{\ve})\nabla \mu_{\ve}, \nabla v)=0, \quad\forall \, v \in H^1(\Omega), \textrm{ a.e. in }(0,\infty),
           \end{equation}
where the chemical potential $\mu_{\ve}$ is given by
            \begin{equation}
            \label{mueps}
                \mu_{\ve}= -\div (a_{\ve}(u_{\ve})\nabla u_{\ve})+ \dfrac{a_{\ve}'(u_{\ve})}{2} |\nabla u_{\ve}|^2+f(u_{\ve})-\theta_0 u_{\ve}, \quad \textrm{a.e. in }\Omega \times (0,\infty).
            \end{equation}
            Furthermore, $\partial_{\mathbf{n}}u_{\ve}=0$ almost everywhere on $\partial \Omega \times (0,\infty)$ and $u_{\ve}(\cdot , 0)= u_0(\cdot)$ in $\Omega$. 
            
            \vspace{0.05cm}
			\item[(ii)] \textbf{Mass conservation}: the total mass of the solution $u_\ve$ is conserved over time, 
            \begin{equation}
                \label{massueps}
                \overline{u_{\ve}}(t)= \dfrac{1}{|\Omega|}\int_{\Omega} u_{\ve}(t) \ \dx = \dfrac{1}{|\Omega|}\int_{\Omega} u_0 \ \dx = \overline{u_0}, \hspace{1.0 cm}\forall \, t \ge 0.
            \end{equation}
            
            \vspace{0.05cm}
			\item[(iii)] \textbf{Energy inequality}: defining, for $ t \ge 0 $,
            $$
            E_{\ve}(u_\ve(t)):= \int_{\Omega} \left(\dfrac{a_{\ve}(u_{\ve})}{2}|\nabla u_{\ve}|^2+ \Psi(u_\ve)\right) \dx,
            $$
			the following integral inequality holds
			\begin{equation} \label{energy inequality}
				E_{\ve}(u_\ve(t)) +  \int_0^t \int_{\Omega} b_{\ve}(u_{\ve})|\nabla \mu_{\ve}|^2 \ \dx \ds \le E_{\ve}(u_0), \quad \forall \, t \geq 0.
			\end{equation} 
		\end{itemize} 
	\end{theorem}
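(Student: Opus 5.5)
The plan is to derive Theorem \ref{Ex Weak App} directly from \cite[Theorem 1.3]{CGGS2025}, after checking that its hypotheses hold for fixed $\ve\in(0,1)$. That theorem concerns the Cahn-Hilliard system with a non-degenerate mobility and a positive, bounded diffusion coefficient, coupled with the Flory-Huggins potential $\Psi$ given by \eqref{Psi}--\eqref{FL-pot}.

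\textbf{Step 1: structural assumptions on the coefficients.} From \eqref{a-b-def}, $b_\ve\in C^1([-1,1])$ with $\ve\le b_\ve\le 2$. Likewise $a_\ve=2/b_\ve$ is $C^2$ (indeed smooth) on $[-1,1]$ with $1\le a_\ve\le 2/\ve$. Both can be extended to $C^1$, respectively $C^2$, functions on $\mathbb{R}$ preserving these bounds, should the cited theorem require coefficients defined on all of $\mathbb{R}$. Since the solutions satisfy $|u_\ve|<1$, the extension does not affect anything. The potential is exactly the logarithmic one treated there, with $0<\theta<\theta_0$, and $\Omega$ is smooth and bounded.

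\textbf{Step 2: the initial datum.} The hypotheses \eqref{I1}--\eqref{mass} are exactly the ones on $u_0$ in \cite{CGGS2025}, so it remains to check that $E_\ve(u_0)<\infty$. This follows from the pointwise comparison $a_\ve(s)\le a(s)$ for $|s|<1$. On $\{|u_0|=1\}$ we have $\nabla u_0=0$ a.e. (Stampacchia), so the gradient term causes no trouble there. Therefore $E_\ve(u_0)\le E(u_0)<\infty$, and in fact $E_\ve(u_0)\le \tfrac{1}{\ve}\|\nabla u_0\|^2_{L^2(\Omega)}+C$ regardless. This uniform-in-$\ve$ bound will also be useful later.

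\textbf{Step 3: read off the conclusions.} The cited theorem then yields a global weak solution with the regularity \eqref{B1}, including the strict bound $|u_\ve|<1$ a.e. and $f(u_\ve)\in L^2_{\rm uloc}(L^6)$. It also gives the weak formulation \eqref{ueps}, the pointwise expression \eqref{mueps} for $\mu_\ve$, the Neumann condition, mass conservation \eqref{massueps}, and the energy inequality \eqref{energy inequality}. Mass conservation also follows directly by testing \eqref{ueps} with $v\equiv1$. The continuity $u_\ve\in C([0,T];L^2)$ needed to make sense of $u_\ve(0)=u_0$ follows from $u_\ve\in L^\infty(H^1)$ and $\partial_tu_\ve\in L^2((H^1)')$.

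The only point requiring care, and the main (mild) obstacle, is matching the precise form of the hypotheses in \cite{CGGS2025}. In particular, any growth or convexity conditions there on $a$ (e.g. involving $a'$ and $a''$) must hold for $a_\ve$, where they are only needed on the compact interval $[-1,1]$. The time-uniform bounds (the uloc spaces, $L^\infty(0,\infty;H^1)$) come from the global energy bound, so the dependence on $\ve$ of the constants is irrelevant at this stage.
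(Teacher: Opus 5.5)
Your proposal is correct and follows the same route as the paper. The paper proves this statement simply by invoking \cite[Theorem 1.3]{CGGS2025} for each fixed $\varepsilon\in(0,1)$, after noting that $b_\varepsilon\in C^1([-1,1])$ with $\varepsilon\le b_\varepsilon\le 2$, that $a_\varepsilon\in C^2([-1,1])$ with $1\le a_\varepsilon\le 2/\varepsilon$, and that the Flory--Huggins potential is left unchanged; your extra checks (finiteness of $E_\varepsilon(u_0)$ via $a_\varepsilon\le a$, which the paper likewise records as $E_\varepsilon(u_0)\le E(u_0)$, mass conservation, and time continuity for the initial condition) are consistent with it.
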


\begin{remark}
We recall that the convexity of $\Omega$ is not required in Theorem \ref{Ex Weak App}.
\end{remark}

	\section{Existence of weak solutions: proof of Theorem \ref{Main Result}} 
	\label{s-proof-main}
	
	Let $T>0$ be chosen arbitrarily and let us consider the family $\{ u_{\ve} \}_{\ve>0}$ of weak solutions, with corresponding chemical potential $\mu_{\ve}$, provided by Theorem \ref{Ex Weak App}.
For any $0< \varepsilon <1$, we introduce the family of functions $\{ \phi_\ve(\cdot) \}_{\ve>0}$, defined as follows
$$
\phi_{\ve}: [-1,1] \to \mathbbm{R}
$$
 \begin{equation}
        \label{phieps}
        \phi_{\ve}(s):= \int_0^{s} \sqrt{\dfrac{a_{\ve}(r)}{2}}\ \dr 
        = \int_0^{s}  \dfrac{1}{\sqrt{1-r^2+\ve}}\ \dr = \arcsin \left( \dfrac{s}{\sqrt{1+\ve}}\right)\!, 
     \quad \forall \, s \in [-1,1].
    \end{equation}
Since the function $s \mapsto \frac{s}{\sqrt{1+\varepsilon}}$ maps $[-1,1]$ onto $\left[ \frac{-1}{\sqrt{1+\varepsilon}}, \frac{1}{\sqrt{1+\varepsilon}}\right]$ and the $\arcsin(\cdot) \in C^\infty(-1,1)$, we deduce that $\phi_{\ve} \in C^\infty([-1,1])$, for any $0<\varepsilon<1$. In particular, we observe that
$$
 \sqrt{a_\ve(s)}= \sqrt{2}\phi'_{\ve}(s), \quad \forall \, s \in [-1,1].
$$
 Recalling that $u_\ve: \Omega_T \to (-1,1)$ for any $ \ve \in (0,1)$ owing to \eqref{B1.2},
we consider the family of functions $\{ \phi_\ve(u_{\ve}) \}_{\ve>0}$ defined as follows
$$
\phi_\ve(u_{\ve})=\arcsin \left( \dfrac{u_{\ve}}{\sqrt{1+\ve}}\right).
$$ 
Then, in light of the regularity properties of $\phi_\ve(\cdot)$ and $u_\varepsilon$ (cf. \eqref{B1.1}), all the subsequent computations are rigorously justified by the standard chain rule for the composition of functions:
 \begin{align}
    \nabla \phi_{\ve}(u_{\ve})&=\phi'_{\ve}(u_{\ve})\nabla u_{\ve} = \sqrt{\dfrac{a_{\ve}(u_{\ve})}{2}} \nabla u_{\ve},\hspace{1.0 cm} \sqrt{a_{\ve}(u_{\ve})}\nabla u_{\ve}= \sqrt{2}\nabla \phi_{\ve}(u_{\ve}),
 \label{nabla phi}
 \end{align}
 and 
 \begin{align}
    \Delta \phi_{\ve} (u_{\ve})&= \phi''_{\ve}(u_{\ve})|\nabla u_{\ve}|^2+\phi'_{\ve}(u_{\ve})\Delta u_{\ve}. \label{delta phi}
    \end{align}
    We can thus reformulate \eqref{mueps}, as in \cite{SP2013}, in terms of $\phi_{\ve}$ as follows
    \begin{align}
    \mu_{\ve} & = - \div(a_{\ve}(u_{\ve}) \nabla  u_{\ve}) +\dfrac{a_{\ve}'(u_{\ve})}{2} |\nabla u_{\ve} |^2 + f(u_{\ve}) -\theta_0 u_{\ve}, \notag\\[3pt]
    &= - a_{\ve}(u_{\ve}) \Delta u_{\ve} -\dfrac{a_{\ve}'(u_{\ve})}{2} |\nabla u_{\ve} |^2 + f(u_{\ve}) -\theta_0 u_{\ve}, \notag\\[3pt]
    &= -\sqrt{a_\ve(u_\ve)}\div(\sqrt{a_\ve(u_\ve)} \nabla u_\ve) + f(u_{\ve}) -\theta_0 u_{\ve} , \notag\\[3pt]\label{newformulation}
    &= -2 \phi_\ve'(u_\ve)\Delta \phi_\ve(u_\ve) + f(u_{\ve}) -\theta_0 u_{\ve}, \hspace{2.0 cm} \textrm{ a.e. in }\Omega_T. 
    \end{align}
     
In order to pass to the limit as $\ve \to 0$, suitable compactness properties are required for the sequences $u_\ve$ and $\phi_\ve(u_\ve)$. To this end, we derive some uniform estimates which are independent of $\ve$. In what follows, $C$ denotes a positive constant possibly depending on the parameters of the system, the initial condition $u_0$ and $T$, but independent of $\ve$.
	
\subsection{Energy estimates}
\label{ss1}
Since $u_0$ satisfies \eqref{I1}--\eqref{E-0} and $a_{\ve}(s) \le a(s)$ for any $s\in (-1,1)$, we deduce that $E_{\ve}(u_0) \leq E(u_0)<\infty$, for any fixed $\ve \in (0,1)$.  Therefore, as a consequence of the energy inequality \eqref{energy inequality}, we obtain the following uniform bounds: 
	\begin{align}
		\| F(u_{\ve}) \|_{L^{\infty}(0,T; L^{1}(\Omega))} &\leq C, \label{E1}\\
        \left\|\sqrt{ a_{\ve}(u_{\ve}) }\nabla u_{\ve} \right\|_{L^{\infty}(0,T; L^2(\Omega))} &\leq C, \label{E2}\\
        \left\|\sqrt{ b_{\ve}(u_{\ve}) }\nabla \mu_{\ve} \right\|_{L^{2}(0,T; L^2(\Omega))} &\leq C. \label{E3}
	\end{align}
    Since $a_{\ve}(s) \ge 1$ for any $\ve \in (0,1)$, \eqref{E2} implies that
    \begin{equation}
        \label{E2bis}
        \| \nabla u_{\ve} \|_{L^{\infty}(0,T; L^{2}(\Omega))} \leq C.
    \end{equation}
The latter, combined with the Poincar\'{e} inequality \eqref{PI} and the mass conservation \eqref{massueps}, leads to the uniform bound
    \begin{equation}
        \label{E2tris}
        \| u_{\ve} \|_{L^{\infty}(0,T; H^{1}(\Omega))} \leq C.
    \end{equation}
 Moreover, recalling \eqref{nabla phi}, we also deduce from \eqref{E2}  that
    \begin{equation}
        \label{E2quattro}
        \| \nabla \phi_{\ve}(u_{\ve}) \|_{L^{\infty}(0,T; L^{2}(\Omega))} \leq C.
    \end{equation}
In addition, noticing that $|\phi_{\ve}(u_{\ve})| \le |\phi(u_{\ve})|= |\arcsin (u_\varepsilon)|$, which is uniformly bounded since \eqref{B1.2} holds, it follows that
	\begin{equation} \label{E2cinque}
		\| \phi_{\ve}(u_{\ve})\|_{L^{\infty}(0,T; H^1(\Omega))} \leq C.
	\end{equation}

\subsection{Key estimate of $\phi_\varepsilon(u_\varepsilon)$}
\label{Novel}

We need some supplementary estimates for $u_\ve$ and $\phi_\ve(u_\ve)$,
which can be achieved by choosing a suitable test function in the equivalent formulation \eqref{newformulation}.
We consider two different cases.

\subsubsection{Convex domains.}

Taking $-\Delta u_\ve = - \div\, \left( \sqrt{b_\ve(u_\ve)}\nabla \phi_\ve(u_\ve) \right)$ as test function in \eqref{newformulation}, we obtain the integral identity
\begin{align} 
		2\int_\Omega &\phi_\ve'(u_\ve) \Delta \phi_\ve(u_\ve) \Delta u_\ve \ \dx - \int_\Omega f(u_\ve) \Delta u_\ve \  \dx \notag \\[3pt]
		&= \int_\Omega \sqrt{b_\ve(u_\ve)}\nabla \phi_\ve(u_\ve) \cdot \nabla \mu_\ve \ \dx + \theta_0 \int_\Omega |\nabla u_\ve |^2 \ \dx.
\label{SE 1}
\end{align}
The main point is the control of the first integral on the left-hand side. 
First, thanks to the identity \eqref{delta phi}, we rewrite 
\begin{equation}
    \label{SE 2}
 2 \int_\Omega \phi_\ve'(u_\ve) \Delta \phi_\ve(u_\ve) \Delta u_\ve \ \dx 
 =
 2 \int_\Omega |\Delta \phi_\ve(u_\ve)|^2 \ \dx - 2 \int_\Omega \dfrac{\phi_\ve''(u_\ve)}{(\phi_\ve'(u_\ve))^2}\Delta \phi_\ve(u_\ve)|\nabla \phi_\ve(u_\ve)|^2 \ \dx.
\end{equation}
In particular, by using definition \eqref{phieps}, we obtain
 \begin{align}
&2\int_\Omega \phi_\ve'(u_\ve)\Delta \phi_\ve(u_\ve)\Delta u_\ve \ \dx  \notag
\\
&\quad = 2 \int_\Omega |\Delta \phi_\ve(u_\ve)|^2 \ \dx -2 \int_\Omega \dfrac{u_\ve}{\sqrt{1- u_\ve^2+\ve}}|\nabla \phi_\ve(u_\ve)|^2 \Delta \phi_\ve(u_\ve)\ \dx.
    \label{riscrivo}
\end{align}
Now, we recall that
$$\phi_\ve(u_\ve)= \arcsin \left( \dfrac{u_\ve}{\sqrt{1+\ve}}\right)\!, \hspace{1.0 cm} u_\ve = \sqrt{1+\ve}\sin(\phi_\ve(u_\ve)).$$
In particular, the definition of $\phi_\ve(u_\ve)$ ensures $\phi_\ve(u_\ve) \in \displaystyle \left( -\dfrac{\pi}{2}, \dfrac{\pi}{2}\right)$, and thus $\cos(\phi_\ve(u_\ve))>0$.
This leads to 
\begin{align*}
- \int_\Omega \dfrac{u_\ve}{\sqrt{1- u_\ve^2+\ve}}|\nabla \phi_\ve(u_\ve)|^2 \Delta \phi_\ve(u_\ve)\ \dx 
& = 
\int_\Omega \dfrac{-\sqrt{1+\ve}\sin(\phi_\ve(u_\ve))}{\sqrt{1- (1+\ve)\sin^2(\phi_\ve(u_\ve))+\ve}}|\nabla \phi_\ve(u_\ve)|^2 \Delta \phi_\ve(u_\ve)\ \dx
\\[3pt] 
& = 
\int_\Omega - \dfrac{\sin(\phi_\ve(u_\ve))}{\cos(\phi_\ve(u_\ve))}|\nabla \phi_\ve(u_\ve)|^2 \Delta \phi_\ve(u_\ve)\ \dx.
\end{align*}
In order to control the term on the right-hand side of \eqref{riscrivo}, we introduce the following result inspired by the work of Canc{\`e}s and Matthes \cite{CM2023}.
\begin{lemma}
\label{refined-lemma}
Let $\Omega\subset\mathbbm{R}^d$ be a bounded, smooth and convex domain with $d=2,3$, and let $v\in H^2(\Omega)$ satisfy $\partial_\mathbf{n} v=0$ on $\partial \Omega$.
Let $J\subset\mathbbm{R}$ be an interval such that $v(x)\in J$ for all
$x\in\overline{\Omega}$, and assume that $H\in W^{2,\infty}(J)$. Set
\[
    I_H
    :=
    \int_\Omega H'(v)|\nabla v|^2\Delta v \ \dx.
\]
Then,
\begin{align}
    I_H+\int_\Omega|\Delta v|^2\ \dx
    \geq{}&
    \frac1d\int_\Omega|\Delta v|^2\ \dx
    \nonumber\\
    &+
    \frac{d}{d+2}
    \int_\Omega
    \left[
        -H''(v)
        -
        \frac{d}{d+2}|H'(v)|^2
    \right]
    |\nabla v|^4\ \dx.
\label{eq:refined-lemma}
\end{align}
\end{lemma}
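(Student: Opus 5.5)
The plan is to integrate $I_H$ by parts once. Then split the Hessian into its trace part and its traceless part. The trace part can be absorbed back into $I_H$, and the traceless part is handled by the convexity inequality of Lemma \ref{lemma H2} together with Young's inequality.

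\textbf{Integration by parts.} Assume first that $v$ is smooth up to the boundary with $\partial_{\mathbf n}v=0$. Since the boundary term vanishes,
\begin{align*}
I_H=-\int_\Omega \nabla\big(H'(v)|\nabla v|^2\big)\cdot\nabla v\ \dx
=-\int_\Omega H''(v)|\nabla v|^4\ \dx-2\int_\Omega H'(v)\,D^2v(\nabla v,\nabla v)\ \dx .
\end{align*}
Write $D^2v=\frac{\Delta v}{d}\,\mathrm{I}+A$, where $A$ is symmetric and traceless. This gives $D^2v(\nabla v,\nabla v)=\frac1d\Delta v|\nabla v|^2+A(\nabla v,\nabla v)$. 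The first piece reproduces $-\frac2d I_H$, so $(1+\frac2d)I_H=-\int H''|\nabla v|^4-2\int H'A(\nabla v,\nabla v)$, that is,
\begin{align*}
I_H=\frac{d}{d+2}\left[-\int_\Omega H''(v)|\nabla v|^4\ \dx-2\int_\Omega H'(v)A(\nabla v,\nabla v)\ \dx\right].
\end{align*}

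\textbf{Convexity and Young's inequality.} Since $\Omega$ is convex, Lemma \ref{lemma H2} gives $\int|\Delta v|^2\ge\int|D^2v|_F^2$. Using the orthogonal splitting $|D^2v|_F^2=\frac1d|\Delta v|^2+|A|_F^2$, we get
\[
I_H+\int_\Omega|\Delta v|^2\ \dx\ \ge\ \frac1d\int_\Omega|\Delta v|^2\ \dx+\int_\Omega|A|_F^2\ \dx+I_H .
\]
It remains to bound the cross term. For any vector $\xi$ we have $|A(\xi,\xi)|\le|A|_F|\xi|^2$. With $c=\frac{d}{d+2}$, Young's inequality gives
\[
2c\,|H'(v)|\,|A|_F|\nabla v|^2\le |A|_F^2+c^2|H'(v)|^2|\nabla v|^4 .
\]
Therefore $\int|A|_F^2-2c\int H'A(\nabla v,\nabla v)\ge -c^2\int|H'|^2|\nabla v|^4$. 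Inserting this into the expression for $I_H$ yields exactly \eqref{eq:refined-lemma}.

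\textbf{Regularity and approximation.} The main obstacle is justifying the computation for $v\in H^2(\Omega)$ only, since the formal integration by parts involves third derivatives of $v$. The integrands themselves are under control: for $d\le 3$ we have $v\in C(\overline\Omega)$ and $\nabla v\in L^4$ by \eqref{GNI L4}, and $H',H''$ are bounded on $J$. So every term in \eqref{eq:refined-lemma} is finite, and the condition $v(x)\in J$ makes sense pointwise.

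To pass from smooth functions to $H^2$, approximate $v$ by smooth $v_k$ with $\partial_{\mathbf n}v_k=0$ and $v_k\to v$ in $H^2$. For instance, solve $v_k-\Delta v_k=g_k$ with smooth $g_k\to v-\Delta v$ in $L^2$, and use elliptic regularity on the smooth domain. Then $v_k\to v$ uniformly and $\nabla v_k\to\nabla v$ in $L^4$, so each term converges.

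One technical point remains: $v_k$ may leave $J$ slightly. I would handle this by first extending $H$ to a $W^{2,\infty}$ function on a neighbourhood of $J$, and then letting $k\to\infty$. Uniform convergence $v_k\to v$ makes the extension irrelevant in the limit.
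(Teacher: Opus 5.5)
Your proof is correct and is essentially the paper's argument. Both use the same integration by parts, the trace/traceless splitting of $D^2v$, Lemma~\ref{lemma H2}, and a square completion. Solving the identity for $I_H$ is equivalent to the paper's step of adding $\frac{2}{d+2}$ times \eqref{eq:zero-identity}, and your Cauchy--Schwarz/Young bound is exactly the paper's $\bigl|\mathbb{D}(v)-\tfrac{d}{d+2}H'(v)\nabla v\otimes\nabla v\bigr|_F^2\ge 0$ written out.

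Your approximation paragraph is more explicit than the paper's one-line density remark. One detail is left implicit by both you and the paper. If $H''$ is merely bounded rather than continuous, convergence of $\int_\Omega H''(v_k)|\nabla v_k|^4\,\dx$ is not immediate from uniform convergence of $v_k$. In that case you should also mollify $H$, using that $\nabla v=0$ a.e. on $v^{-1}(N)$ for every null set $N$. This is immaterial for the smooth choice $H=\ln\cos$ made in the paper.
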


\begin{proof}
By density it is sufficient to assume that $v \in H^3(\Omega)$ with 
$v(x)\in J$ for all $x\in\overline{\Omega}$ and
$\partial_{\mathbf{n}} v=0$ on $\partial \Omega$.
We first derive an identity for $I_H$. Consider the vector field
\[
    X:=H'(v)|\nabla v|^2\nabla v.
\]
By the product rule,
\begin{align*}
    \operatorname{div}X
    &=
    H''(v)|\nabla v|^4
    +
    H'(v)\nabla|\nabla v|^2\cdot\nabla v
    +
    H'(v)|\nabla v|^2\Delta v.
\end{align*}
Since
\[
    \frac12\nabla|\nabla v|^2=D^2v\,\nabla v,
\]
we obtain
\[
    \operatorname{div}X
    =
    H''(v)|\nabla v|^4
    +
    2H'(v)  D^2v\,\nabla v \cdot \nabla v
    +
    H'(v)|\nabla v|^2\Delta v.
\]
Integrating over $\Omega$ and using the divergence theorem gives
\[
    \int_\Omega\operatorname{div}X\ \dx
    =
    \int_{\partial\Omega}
    H'(v)|\nabla v|^2\partial_{\mathbf{n}} v\ 
    \mathrm{d}\mathcal H^{d-1}
    =
    0,
\]
where the last equality follows from the homogeneous Neumann boundary
condition. Therefore,
\begin{equation}
\label{eq:A-identity}
    I_H
    =
    -\int_\Omega H''(v)|\nabla v|^4\ \dx
    -
    2\int_\Omega
    H'(v)
     D^2v\,\nabla v \cdot \nabla v \ \dx.
\end{equation}
Adding $\int_\Omega|\Delta v|^2\ \dx$ to both sides and using Lemma \ref{lemma H2}, we have
\begin{align}
    I_H+\int_\Omega|\Delta v|^2\ \dx
    \geq
    \int_\Omega
    \Big[
        |D^2v|_F^2
        -
        2H'(v)
         D^2v\,\nabla v \cdot \nabla v
        -H''(v)|\nabla v|^4
    \Big] \ \dx.
\label{eq:starting-cances}
\end{align}

We now use once again \eqref{eq:A-identity}, equivalently written as
\begin{equation}
\label{eq:zero-identity}
    0
    =
    \int_\Omega
    \Big[
        H''(v)|\nabla v|^4
        +
        2H'(v)
        D^2v\,\nabla v \cdot \nabla v
        +
        H'(v)|\nabla v|^2\Delta v
    \Big]\ \dx.
\end{equation}
Multiplying \eqref{eq:zero-identity} by $2/(d+2)$ and adding it to
the right-hand side of \eqref{eq:starting-cances}, we obtain
\begin{align}
    I_H+\int_\Omega|\Delta v|^2\ \dx
    \geq
    \int_\Omega
    \Big[
        |D^2v|_F^2
        &-
        \frac{2d}{d+2}
        H'(v)
         D^2v\,\nabla v \cdot \nabla v 
        \nonumber\\
        &+
        \frac{2}{d+2}
        H'(v)|\nabla v|^2\Delta v
        -
        \frac{d}{d+2}
        H''(v)|\nabla v|^4
    \Big] \ \dx.
\label{eq:cances-combination}
\end{align}

Define the matrix
\[
    \mathbb{D}(v)
    :=
    D^2v-\frac{\Delta v}{d}\mathbbm{I}_d,
\]
where $\mathbb{I}_d$ is the identity matrix.
Since the matrix $D^2 v$ is symmetric,
and 
$\Delta v= \textrm{tr}(D^2 v)$ leads to $\textrm{tr}\left(  \mathbb{D}(v) \right)=0$, we infer that
\begin{equation}
\label{eq:R-norm}
\left|D^2 v \right|_F^2
= \textrm{tr}\left[ \left(  \mathbb{D}(v)  +\dfrac{\Delta v}{d} \mathbbm{I}_d\right)^2\right]
= \left|  \mathbb{D}(v)  \right|_F^2+\dfrac{|\Delta v|^2}{d}
\end{equation}
and
\begin{equation}
\label{eq:R-mixed}
    D^2v\,\nabla v \cdot \nabla v
    =
  \mathbb{D}(v) \, \nabla v \cdot \nabla v
    +
    \frac{\Delta v}{d}|\nabla v|^2.
\end{equation}
Substituting \eqref{eq:R-norm} and \eqref{eq:R-mixed} into
\eqref{eq:cances-combination}, the mixed terms involving
$H'(v)|\nabla v|^2\Delta v$ cancel exactly. Hence, we arrive at
\begin{align}
    I_H+\int_\Omega|\Delta v|^2\ \dx
    & \geq \
    \frac1d \int_\Omega|\Delta v|^2\ \dx
    \nonumber\\
    &\quad +
    \int_\Omega
    \Big[
        | \mathbb{D}(v) |_F^2
        -
        \frac{2d}{d+2}
        H'(v)   \mathbb{D}(v) \nabla v \cdot \nabla v
        -
        \frac{d}{d+2}
        H''(v)|\nabla v|^4
    \Big]\ \dx.
\label{eq:trace-free}
\end{align}

Set
\[
    a:=\frac{d}{d+2}.
\]
The last integrand can be completed to a square as follows
\begin{align}
    &| \mathbb{D}(v) |_F^2
    -
    2a H'(v)  \mathbb{D}(v) \nabla v \cdot \nabla v
    -
    aH''(v)|\nabla v|^4
    \nonumber\\
    &\qquad=
    \left|
         \mathbb{D}(v) -aH'(v)\nabla v\otimes\nabla v
    \right|_F^2
    +
    a
    \Big[
        -H''(v)-a|H'(v)|^2
    \Big]
    |\nabla v|^4.
\label{eq:square-completion}
\end{align}
Since the first term on the right-hand side of
\eqref{eq:square-completion} is non-negative, we infer from
\eqref{eq:trace-free} that
\begin{align}
    I_H+\int_\Omega|\Delta v|^2\ \dx
    &\geq
    \frac1d \int_\Omega|\Delta v|^2\ \dx
    \nonumber\\
    &\quad 
    +
    \frac{d}{d+2}
    \int_\Omega
    \left[
        -H''(v)
        -
        \frac{d}{d+2}|H'(v)|^2
    \right]
    |\nabla v|^4\ \dx,
\end{align}
which proves \eqref{eq:refined-lemma}.
\end{proof}

We aim to apply Lemma \ref{refined-lemma} with a suitable function $H$ such that $H'(s)=- \dfrac{\sin (s)}{\cos(s)}$ on $\left(-\frac{\pi}{2}, \frac{\pi}{2}\right)$. 
For any $0<\varepsilon<1$ fixed, define
$$ 
J= I_\varepsilon :=\left[ -\arcsin \left(\frac{1}{\sqrt{1+\ve}} \right), \arcsin \left(\frac{1}{\sqrt{1+\ve}} \right) \right] \subset \left(-\frac{\pi}{2}, \frac{\pi}{2}\right)
$$
and 
$$
H: I_\varepsilon \to \mathbbm{R}, \qquad H(s)= \ln \left( \cos(s) \right).
$$
Since
$$
\cos: I_\varepsilon \to 
\left[ \sqrt{ \frac{\varepsilon}{1+\varepsilon}}, 1 \right],
$$
where we have used that $\cos (\arcsin (x))= \sqrt{1 - x^2}$, for any $x \in [-1,1]$, we deduce that $H \in W^{2,\infty}(I_\varepsilon)$.
Then, we compute 
\[
    -H''(s)-\frac{3}{5}(H')^2(s)
    =\frac{1}{\cos^2(s)}-\frac{3}{5}\frac{\sin^2(s)}{\cos^2(s)}
    =\frac{3}{5}+\frac{2}{5}\frac{1}{\cos^2(s)}, \quad \forall \, s \in I_\varepsilon.
    \]
    
Let $v(x)=\phi_\varepsilon(u_\varepsilon(x))\in I_\varepsilon$ for every $x\in\overline{\Omega}$.
Then, recalling that $\frac{1}{\cos^2(\phi_\ve(u_\ve))}=\dfrac{1+\varepsilon}{1+\varepsilon - u_\ve^2}$, an application of Lemma \ref{refined-lemma} yields
\begin{align*}
&\int_\Omega |\Delta \phi_\ve(u_\ve)|^2 \ \dx - \int_\Omega \dfrac{u_\ve}{\sqrt{1- u_\ve^2+\ve}}|\nabla \phi_\ve(u_\ve)|^2 \Delta \phi_\ve(u_\ve)\ \dx
\\
&
\quad 
\geq
    \frac13 \int_\Omega|\Delta \phi_\varepsilon(u_\varepsilon)|^2\ \dx
  +
    \frac35 \int_\Omega
   \Big[ \frac{3}{5}+\frac{2}{5}\frac{1}{\cos^2( \phi_\ve(u_\ve))}\Big]
    |\nabla  \phi_\ve(u_\ve)|^4\ \dx
 \\
 &  
\quad 
=
    \frac13 \int_\Omega|\Delta \phi_\varepsilon(u_\varepsilon)|^2\ \dx
  +
    \int_\Omega
   \Big[ \frac{9}{25}
+\frac{6}{25}
\frac{1+\varepsilon}
{1+\varepsilon-u_\varepsilon^2} \Big]
    |\nabla  \phi_\ve(u_\ve)|^4\ \dx.
\end{align*}

Next, going back to \eqref{SE 1}, we show that the second integral on the left-side is non-negative almost everywhere on $(0,T)$. Let us consider the Lipschitz truncation of $u_\ve$ defined by 
\begin{equation}
\label{trunc}
u_\ve^n := h_n(u_\ve), \quad \mbox{where} \quad h_n(s)=\begin{cases}
        -1+\dfrac{1}{n} \quad &\mbox{if }s< -1+\dfrac{1}{n},\\[3pt]
        s&\mbox{if } -1+\dfrac{1}{n}\le s \le 1-\dfrac{1}{n},\\[3pt]
        1-\dfrac{1}{n} \quad &\mbox{if }s> 1-\dfrac{1}{n}.
    \end{cases}
\end{equation}
It is well known that, for $n \to + \infty$,
\begin{align}
    &u_\ve^n \to u_\ve \quad \mbox{ a.e. in }\Omega_T, \label{conv un-eps}\\[3pt]
    &\nabla u_\ve^n = \nabla u_\ve \cdot \mathbf{1}_{\left[ -1+\frac{1}{n},1-\frac{1}{n}\right]}(u_\ve) \to \nabla u_\ve \quad \mbox{ a.e. in }\Omega_T.\label{conv nabla un-eps}
\end{align}
Let $\psi \in C^\infty_c(0,T)$ be a test function such that $\psi \geq 0$. Since $|f(u_\ve^n)| \leq |f(u_\ve)|$ almost everywhere in $\Omega_T$ and $f(u_\ve) \in L^2(\Omega_T)$, we have 
\begin{align*}
- \int_0^T \int_\Omega f(u_\ve) \Delta u_\ve \  \dx \ \psi(t) \ \dt
=- \lim_{n \to \infty} \int_0^T \int_\Omega f(u_\ve^n) \Delta u_\ve \ \psi(t) \ \dx  \dt.
\end{align*}
On the other hand, integrating by parts and using \eqref{conv nabla un-eps}, it follows that
\begin{align}
- \lim_{n \to \infty} \int_0^T  \int_\Omega f(u_\ve^n) \Delta u_\ve \ \psi(t) \ \dx\dt
&=\lim_{n \to \infty}  \int_0^T \int_\Omega f'(u_\ve^n) \nabla u_\ve^n \cdot \nabla u_\ve \ \psi(t) \ \dx \dt
\notag
\\[3pt]
& =\lim_{n \to \infty}  \int_0^T  \int_\Omega f'(u_\ve^n) |\nabla u_\ve^n|^2 \  \psi(t) \ \dx \dt \geq 0,
\label{f'-pos}
\end{align}
which implies the desired claim.

Collecting the above computations, we derive from \eqref{SE 1} that the following inequality
\begin{align*}
\frac23 \int_\Omega |\Delta \phi_\ve(u_\ve)|^2 \ \dx 
      &+\dfrac{18}{25}\int_\Omega  |\nabla \phi_\ve(u_\ve)|^4 \ \dx
 \\[3pt]
 &\leq 
 \int_\Omega \sqrt{b_\ve(u_\ve)} \left|\nabla \phi_\ve(u_\ve) \cdot \nabla \mu_\ve \right| \ \dx + \theta_0 \int_\Omega |\nabla u_\ve |^2 \ \dx
\end{align*}
holds almost everywhere on $(0,T)$. Now, computing the square and integrating over the time interval $(0,T)$, we deduce that
	\begin{align}
& \frac{4}{9} \int_0^T \left( \int_\Omega |\Delta \phi_\ve(u_\ve)|^2 \ \dx \right)^2  \dt
+\dfrac{324}{625} \int_0^T \left( \int_\Omega  |\nabla \phi_\ve(u_\ve)|^4 \ \dx \right)^2 \dt
\notag
	 \\[3pt]
&\quad \le  2  \int_0^T \left( \int_\Omega \sqrt{b_\ve(u_\ve)}
\left| \nabla \phi_\ve(u_\ve) \cdot \nabla \mu_\ve  \right| \ \dx \right)^2 \dt + 2  \theta_0^2 \int_0^T \left( \int_\Omega |\nabla u_\ve |^2 \ \dx \right)^2 \dt.
	\label{SE 7}
	\end{align}
Here we have used on the left-hand side that $a^2+b^2 \leq (a+b)^2$ for any $a,b \in \mathbbm{R}_+$.	
Exploiting the Cauchy-Schwarz inequality, we arrive at
	\begin{align}
		&\frac49 \int_0^T \left\| \Delta \phi_\ve(u_\ve) \right\|_{L^2(\Omega)}^4 \ \dt
		+\dfrac{4}{9} \int_0^T  \left\| \nabla \phi_\ve(u_\ve) \right\|_{L^4(\Omega)}^8 \ \dt
		\notag
		 \\[3pt]
		&\quad \le 2 \int_0^T \left\| \nabla \phi_\ve(u_\ve)\right\|_{L^2(\Omega)}^2 \Big\| \sqrt{b_\ve(u_\ve)} \nabla \mu_\ve \Big\|_{L^2(\Omega)}^2 \ \dt + 2 \theta_0^2 \int_0^T \| \nabla u_\ve \|_{L^2(\Omega)}^4 \ \dt.  
\label{SE 8}
	\end{align}
By using \eqref{E3}, \eqref{E2tris} and \eqref{E2cinque}, we derive that
\begin{equation}
    \label{delta phi(ueps)}
    \|\Delta \phi_\ve(u_\ve)\|_{L^4(0,T;L^2(\Omega))} + 
	\| \nabla \phi_\ve(u_\ve)\|_{L^8(0,T;L^4(\Omega))}
     \le C.
\end{equation}
Since  $|\phi_\ve(u_\ve)| \le |\phi(u_\ve)| \leq \frac{\pi}{2}$, and thanks to the homogeneous Neumann boundary conditions for $\phi_\ve(u_\ve)$, we conclude that
\begin{equation}
    \label{phiH2}
    \|\phi_\ve(u_\ve)\|_{L^4(0,T;H^2(\Omega))} \le C
\end{equation}
and
    \begin{equation}
        \label{phiW14}
        \|\phi_\ve(u_\ve)\|_{L^8(0,T;W^{1,4}(\Omega))} \le C.
    \end{equation}
    
We are now in the position to deduce similar bounds for  $u_\ve$.
By \eqref{delta phi}, we have
\begin{align}
    \Delta u_\ve &= \dfrac{1}{\phi_\ve'(u_\ve)}\Delta \phi_\ve(u_\ve) - \dfrac{\phi_\ve''(u_\ve)}{\phi_\ve'(u_\ve)}|\nabla u_\ve|^2
\notag    
    \\[3pt]
    &= \sqrt{1 -u^2_\ve+\ve} \, \Delta \phi_\ve(u_\ve)- \dfrac{\phi_\ve''(u_\ve)}{(\phi_\ve'(u_\ve))^3} |\nabla \phi_\ve(u_\ve)|^2
\notag    
     \\[3pt]
    &=\sqrt{1 -u^2_\ve+\ve} \,  \Delta \phi_\ve(u_\ve)- u_\ve |\nabla \phi_\ve(u_\ve)|^2.
    \label{relation Delta u}
\end{align}
Since $\sqrt{1 -u^2_\ve+\ve}$ and $u_\ve$ are uniformly bounded in $L^\infty(\Omega_T)$, exploiting the uniform estimates of $\Delta \phi_\ve(u_\ve)$ and $|\nabla \phi_\ve(u_\ve)|^2$ in $L^4(0,T; L^2(\Omega))$ (cf. \eqref{delta phi(ueps)}), we infer that
\begin{equation}
    \label{uH2}
    \|u_\ve\|_{L^4(0,T;H^2(\Omega))} \le C.
\end{equation}
Lastly, in light of \eqref{B1.2} and \eqref{uH2}, a direct application of the Gagliardo-Nirenberg interpolation inequality \eqref{GNI L4} yields
   \begin{equation}
        \label{uW14}
        \|u_\ve\|_{L^8(0,T;W^{1,4}(\Omega))} \le C.
    \end{equation}
   
 \medskip  

\begin{remark}
We observe that the above method reveals an additional uniform estimate (not mentioned in \cite{CM2023}) of the term
$$
\int_0^T\left( \int_\Omega \dfrac{1}{1- u_\ve^2+\varepsilon} \ |\nabla \phi_\ve(u_\ve)|^4 \ \dx \right)^2 \, \dt \leq C,
$$
which exhibits a more singular character as $\ve \to 0$ than simply $\| \nabla \phi_\ve(u_\ve) \|_{L^8(0,T; L^4(\Omega))}$.
\end{remark}

\subsubsection{General domains.} We perform an entropy estimate inspired by the work of Abels, Depner and Garcke \cite{ADG2013-2}.
For any $0<\varepsilon<1$, we define the function
$$
B_\varepsilon: [-1,1] \to \mathbbm{R}, \qquad B_\varepsilon(s)= \frac{1}{\sqrt{1+\varepsilon}} - \frac{\sqrt{1-s^2+\varepsilon}}{1+\varepsilon}.
$$
We observe that
\begin{equation}
\label{B_ve prop}
0\leq B_\varepsilon(s)\leq 1, \quad \forall \, s \in [-1,1]
\end{equation}
and
$$
B_\varepsilon'(s)= \frac{s}{(1+\varepsilon)\sqrt{1-s^2+\varepsilon}},
\quad
 B_\varepsilon''(s)= \frac{1}{\left( 1-s^2+\varepsilon\right)^\frac32}=\frac{1}{\left( b_\varepsilon(s)\right)^\frac32}, \quad \forall \, s \in [-1,1].
$$
Since $B_\varepsilon$ is a convex function, by using \cite[Lemma 4.1]{RS}, we have
$$
\int_\Omega B_\ve (u_\ve(t)) \ \dx 
=\int_\Omega B_\ve (u_0) \ \dx +  \int_0^t \langle \partial_t u_\ve, B'_\ve(u_\ve) \rangle \ \mathrm{d}s, \quad \forall \, t \in [0,T].
$$
By exploiting \eqref{ueps} and noticing that $B'_\varepsilon(u_\ve) \in L^2(0,T;H^1(\Omega))$, it follows that 
\begin{align*}
\int_0^t \langle \partial_t u_\ve, B'_\ve(u_\ve) \rangle \ \mathrm{d}s
&= 
-  \int_0^t \int_\Omega b_\ve (u_\ve) \nabla \mu_\ve \cdot  \nabla B'_\ve(u_\ve)  \ \dx \mathrm{d}s
\\[3pt]
&= 
-  \int_0^t \int_\Omega b_\ve (u_\ve) B''_\ve(u_\ve) \nabla \mu_\ve \cdot \nabla u_\ve \ \dx \mathrm{d}s
\\[3pt]
&= 
-  \int_0^t \int_\Omega \phi'_\ve (u_\ve) \nabla \mu_\ve \cdot \nabla u_\ve  \ \dx \mathrm{d}s
\\[3pt]
&= 
-  \int_0^t \int_\Omega  \nabla \mu_\ve \cdot \nabla \phi_\ve(u_\ve) \ \dx \mathrm{d}s,
\quad \forall \, t \in [0,T].
\end{align*}

Now, multiplying \eqref{newformulation} by $-\Delta \phi_\ve(u_\ve)$ and integrating over $\Omega_t$, we obtain the integral identity
\begin{align} 
		2 \int_0^t \int_\Omega &\phi_\ve'(u_\ve) |\Delta \phi_\ve(u_\ve)|^2 \ \dx\ds - \int_0^t \int_\Omega f(u_\ve) \Delta \phi_\ve(u_\ve) \  \dx \ds \notag \\[3pt]
		&= - \int_0^t \int_\Omega  \mu_\ve  \Delta \phi_\ve(u_\ve) \ \dx \ds
		- \theta_0 \int_0^t \int_\Omega  u_\ve \Delta \phi_\ve(u_\ve) \ \dx\ds.
\label{SE 1-new}
\end{align}
Reasoning as in \eqref{trunc}-\eqref{f'-pos}, for any test function $\psi\in C^\infty_c(0,t)$ such that $\psi \geq 0$,
we have
\begin{align*}
- \int_0^t \int_\Omega f(u_\ve) \Delta \phi_\ve(u_\ve) \  \dx \ \psi(t)  \ds
\notag
&=
- \lim_{n \to \infty} \int_0^t  \int_\Omega f(u_\ve^n) \Delta \phi_\ve(u_\ve) \ \psi(t) \ \dx \ds
\notag 
\\[3pt]
&=\lim_{n \to \infty}  \int_0^t \int_\Omega f'(u_\ve^n) \phi'_\ve(u_\ve) \nabla u_\ve^n \cdot \nabla u_\ve \ \psi(t) \ \dx\ds
\notag
\\[3pt]
& =\lim_{n \to \infty}  \int_0^t  \int_\Omega  f'(u_\ve^n) \phi'_\ve(u_\ve) |\nabla u_\ve^n|^2 \ \psi(t) \ \dx \ds \geq 0.
\end{align*}
Besides, we observe that
\begin{align*}
- \theta_0 \int_0^t \int_\Omega u_\ve \Delta \phi_\ve(u_\ve) \ \dx\ds
= \theta_0 \int_0^t \int_\Omega  \nabla u_\ve \cdot \nabla \phi_\ve(u_\ve) \ \dx\ds.
\end{align*}
Thus, by combining the above relations, we arrive at
\begin{align}
&\int_\Omega B_\ve (u_\ve(t)) \ \dx 
+ 2 \int_0^t \int_\Omega \phi_\ve'(u_\ve) |\Delta \phi_\ve(u_\ve)|^2 \ \dx\ds
\notag
\\[3pt]
&\quad \leq \int_\Omega B_\ve (u_0) \ \dx +
  \theta_0 \int_0^t \int_\Omega  \nabla u_\ve \cdot \nabla \phi_\ve(u_\ve) \ \dx\ds, \quad \forall \, t \in [0,T].
\end{align}
In light of \eqref{I1} and \eqref{B_ve prop}, and by exploiting \eqref{E2bis} and \eqref{E2quattro}, we conclude that
\begin{equation}
\label{NEW-deltaphi}
\int_0^T \int_\Omega \phi_\ve'(u_\ve) |\Delta \phi_\ve(u_\ve)|^2 \ \dx\dt \leq C.
\end{equation}  
Since $\phi'_\ve(u_\ve)\geq \frac{1}{\sqrt{2}}$, we deduce in particular that
\begin{equation}
    \label{NEW-phiH2}
    \|\phi_\ve(u_\ve)\|_{L^2(0,T;H^2(\Omega))} \le C
\end{equation}
and, by interpolation (cf.\eqref{GNI L4}),
\begin{equation}
    \label{NEW-phiW14}
    \| \nabla \phi_\ve(u_\ve)\|_{L^4(0,T;L^4(\Omega))} \le C.
\end{equation}
Finally, by using \eqref{relation Delta u}, it is easily seen that
\begin{equation}
    \label{NEW-uH2}
    \|u_\ve\|_{L^2(0,T;H^2(\Omega))} \le C
\end{equation}
and 
\begin{equation}
    \label{NEW-uW14}
    \| \nabla u_\ve\|_{L^4(0,T;L^4(\Omega))} \le C.
\end{equation}

\subsection{Estimate of the free energy potential}
\label{ss3}
The procedure that yields a bound for $f(u_\ve)$ is fairly standard in the case of a strictly positive mobility. This is not expected in the case of degenerate mobility. Nonetheless, we obtain an estimate  of $f(u_\ve)$ with an explicit control of the dependence on $\ve$, which will allow us to pass to the limit in a suitable weak formulation. 

Following the classical approach (see \cite{MZ2004}), there exists a positive constant $C_f$, such that
\begin{equation}
    \label{f 1}
    \int_\Omega |f(u_\ve)| \ \dx \le C_f \left| \int_\Omega f(u_\ve) \left(u_\ve-\overline{u_0} \right)\ \dx \right|+C_f,
\end{equation}
where $C_f \to + \infty$ as $|\overline{u_0}|\to 1$.
Recalling from \eqref{newformulation} that
$$f(u_\ve)= \mu_\ve + 2 \phi'_\ve(u_\ve)\Delta \phi_\ve(u_\ve) + \theta_0u_\ve\quad \mbox{ a.e. in }\Omega_T,$$
we then have
\begin{align*}
    &\left| \int_\Omega f(u_\ve) \left(u_\ve-\overline{u_0} \right) \ \dx \right|
    \\[3pt]
    &\quad 
    \le \left| \int_\Omega \mu_\ve\left(u_\ve-\overline{u_0} \right) \ \dx \right|\ 
    + \left| 2\int_\Omega \phi'_\ve(u_\ve)\Delta \phi_\ve(u_\ve) \left(u_\ve-\overline{u_0} \right) \ \dx \right|
    + \theta_0\left|  \int_\Omega u_\ve \left(u_\ve-\overline{u_0} \right) \ \dx  \right
    |\\[3pt]
    &\quad 
    = \left| \int_\Omega u_\ve \left( \mu_\ve-\overline{\mu_\ve} \right) \ \dx 
    \right| + \left| 2\int_\Omega \dfrac{1}{\sqrt{1-u^2_\ve+\ve}}\Delta \phi_\ve(u_\ve) \left(u_\ve-\overline{u_0} \right) \ \dx \right|
    +\theta_0 \left| \int_\Omega u_\ve \left(u_\ve-\overline{u_0} \right) \ \dx  \right|
    \\[3pt]
    &\quad \le C \|\nabla \mu_\ve \|_{L^2(\Omega)}
    +2 \left\| \dfrac{1}{\sqrt{1-u^2_\ve+\ve}} \right\|_{L^\infty(\Omega)} 
    \|\Delta \phi_\ve(u_\ve)\|_{L^2(\Omega)} 
    \|u_\ve - \overline{u_0}\|_{L^2(\Omega)}
    + \theta_0 \|u_\ve - \overline{u_0}\|^2_{L^2(\Omega)}.
\end{align*}
Recalling that $b_\ve(s) \ge \ve$ and $\sqrt{1-u^2_\ve+\ve} \geq \sqrt{\ve}$, we infer that
\begin{align*}
    \left| \int_\Omega f(u_\ve)(u_\ve-\overline{u_0}) \ \dx \right|
    &\le C\left(\dfrac{1}{\sqrt{\ve}} \left\|\sqrt{b_\ve(u_\ve)}\nabla \mu_\ve \right\|_{L^2(\Omega)}+ \dfrac{1}{\sqrt{\ve}}\|\Delta \phi_\ve(u_\ve)\|_{L^2(\Omega)}+1\right)\!.
\end{align*}
Finally, in light of \eqref{f 1}, and owing to  \eqref{E3} and \eqref{delta phi(ueps)}, an integration in time gives
$$ 
\|f(u_\ve)\|_{L^2(0,T; L^1(\Omega))}\le C\left(\dfrac{1}{\sqrt{\ve}}+1\right)\!,
$$
with $C$ independent of $\ve$. Thus, since $\ve<1$, we conclude that
\begin{equation}
    \label{f L1}
     \|f(u_\ve)\|_{L^2(0,T; L^1(\Omega))}\le C\dfrac{1}{\sqrt{\ve}}.
\end{equation}

\subsection{Convergences} 
We now deduce convergence properties for $\{ u_{\ve} \}_{\ve>0}$ and $\{ \phi_\ve (u_{\ve}) \}_{\ve>0}$, which are needed in the final passage to the limit.
    
	\begin{itemize}
	    \item Convergences for $u_\ve$. First of all, passing to suitable subsequences not relabeled, as a direct consequence of \eqref{E2tris} and \eqref{NEW-uH2}, we infer that 
        \begin{align}
            u_{\ve} \overset{\star}{\rightharpoonup} u &\quad \mbox{in } L^{\infty}(0,T; H^{1}(\Omega)),  \label{conv2}\\[3pt]
			u_{\ve} \rightharpoonup u &\quad \mbox{in } L^{2}(0,T; H^{2}(\Omega)). \label{conv3}
		\end{align}     
    From \eqref{E3}, by direct comparison, we deduce that
    \begin{equation}
        \label{partialt}
        \|\partial_t u_\ve\|_{L^2(0,T;(H^{1}(\Omega))')} \le C,
    \end{equation}
    which, in turn, yields
    \begin{equation}\label{conv4}
        \partial_t u_{\ve} \rightharpoonup \partial_tu \quad \mbox{in } L^{2}(0,T; (H^{1}(\Omega))'). 
    \end{equation}
    By \eqref{NEW-uH2} and \eqref{partialt}, an application of the Aubin-Lions Lemma gives 
    \begin{equation}
        \label{conv1}
        u_{\ve} \to u \quad \mbox{in } L^2(0;T; W^{1,p}(\Omega)), \quad \forall \, p<6;
    \end{equation}
    while, from \eqref{E2tris} and \eqref{partialt},
    $$u_\ve \to u \quad \textrm{ in }C([0,T];L^2(\Omega)),$$
    and in particular, 
    \begin{equation}
        \label{uae}
        u_\ve \to u \quad \textrm{ a.e. in }\Omega_T.
    \end{equation}
    As a consequence, since $|u_\ve(x,t)|<1$ a.e. in $\Omega_T$, we also infer that 
    \begin{equation}
        \label{u tra -1 e 1}
        -1\le u\le 1 \quad \textrm{ a.e. in }\Omega_T.
    \end{equation}
    We finally highlight that, since $u_\ve$ is uniformly bounded in $\Omega_T$, Lemma \ref{convergenza alla Vitali} entails that
    \begin{equation}
        \label{mega}
        u_\ve \to u \quad \mbox{ in }L^\alpha(0,T;L^\beta(\Omega)), \quad \forall \,  1 \le \alpha,\beta < \infty.
    \end{equation}    
    
    \item Convergence of $b_\ve(u_\ve)$. Since $b_\ve(s)= 1-s^2+\ve$ is continuous and 
    $$b_\ve(s) \to b(s) \quad\textrm{ uniformly in }[-1,1],$$
    by the pointwise convergence \eqref{uae}, it is immediate that
    \begin{equation}
    \label{pointwise conv b}
    b_\ve(u_\ve)\to b(u) \quad \textrm{ a.e. in }\Omega_T.
    \end{equation}
    Furthermore, since $b_\ve$ is uniformly bounded, we conclude that
    \begin{equation}
        \label{convb}
        b_\ve(u_\ve) \to b(u) \quad  \textrm{ in }L^p(\Omega_T), \quad \forall \, p\in [2,\infty).
    \end{equation}
    \item Convergences for $\phi_\ve(u_\ve)$. By definition, $\phi_\ve(s)= \arcsin \left( \dfrac{s}{\sqrt{1+\ve}}\right)$ for $s\in [-1,1]$, thus
    $$\phi_\ve(s) \to \phi(s)=\arcsin(s) \quad\textrm{ uniformly in }[-1,1].$$
    Exploiting once again the almost everywhere convergence \eqref{uae}, we also have
    $$\phi_\ve(u_\ve) \to \phi(u)\quad\textrm{ a.e. in }\Omega_T.$$
    Moreover, since $\phi_\ve(u_\ve)$ is uniformly bounded, it follows that
    \begin{equation}
        \label{convphi}
        \phi_\ve(u_\ve) \to \phi(u) \quad \textrm{ in }L^p(\Omega_T), \quad 
        \forall \, p\in [2,\infty).
    \end{equation}
    In light of the uniform bound \eqref{NEW-phiH2}, we can also deduce that
    \begin{equation}
        \phi_{\ve}(u_{\ve}) \rightharpoonup \phi(u) \quad \mbox{in } L^2(0,T; H^2(\Omega)).\label{conv6}
    \end{equation}
   Furthermore, from estimate \eqref{NEW-phiW14}, passing to a suitable subsequence, we obtain
    $$
    \nabla \phi_\ve(u_\ve) \rightharpoonup \nabla \phi(u) \quad \textrm{ in }L^4(0,T; L^4(\Omega)).
    $$
    The next goal is to achieve strong convergences properties. 
     To this end, by the Gagliardo-Nirenberg inequality \eqref{GNI L3}
 $$
 \|\nabla \phi_\ve(u_\ve)-\nabla \phi(u)\|_{L^3(\Omega)}\le C \|\phi_\ve(u_\ve)-\phi(u)\|^{\frac12}_{H^2(\Omega)}\|\phi_\ve(u_\ve)-\phi(u)\|^{\frac12}_{L^6(\Omega)}.
 $$
Then, an application of the H\"older inequality yields
 \begin{align*}
   \int_0^T & \|\nabla \phi_\ve(u_\ve)-\nabla \phi(u)\|_{L^3(\Omega)}^3 \ \dt 
   \\ 
        & \qquad
        \leq 
        C\left( \int_0^T\|\phi_\ve(u_\ve)-\phi(u)\|^{2}_{H^2(\Omega)}\ \dt\right)^{\frac34} 
        \left( \int_0^T\|\phi_\ve(u_\ve)-\phi(u)\|^{6}_{L^6(\Omega)}\ \dt\right)^{\frac14}\!.
    \end{align*}
  Owing to the strong convergence \eqref{convphi} and the uniform control \eqref{NEW-phiH2} of $\phi_\ve(u_\ve)$, we conclude that
    \begin{equation}
        \label{conv5}
        \nabla \phi_{\ve}(u_{\ve}) \to \nabla\phi(u) \quad \mbox{in } L^3(\Omega_T),
    \end{equation}
    and, in particular, 
    \begin{equation}
        \label{pointwise convergence of nablaphi}
        \nabla \phi_\ve(u_\ve) \to \nabla \phi(u) \quad \textrm{ a.e. in }\Omega_T.
    \end{equation}
    Thus, by \eqref{NEW-phiW14} and the pointwise convergence \eqref{pointwise convergence of nablaphi}, in view of Lemma \ref{convergenza alla Vitali}, we also get
    \begin{equation}
        \label{morefortegradphi}
        \nabla \phi_\ve(u_\ve) \to \nabla \phi(u) \quad \textrm{ in }L^\alpha(0,T;L^\beta(\Omega)), \quad \forall \ 1 \le \alpha<4, \ 1 \le \beta <4.
    \end{equation}
    
    \item Identification of $\nabla \phi(u)$. We recall that
    $$
    \nabla \phi_\ve(u_\ve)= \dfrac{1}{\sqrt{1-u^2_\ve +\ve}} \nabla u_\ve \quad \textrm{ a.e. in }\Omega_T.
    $$
  Upon passing to a suitable subsequence, if necessary, we infer from \eqref{conv1} that
  \begin{equation}
        \label{pointwise convergence of nablau}
        \nabla u_\ve \to \nabla u \quad \textrm{ a.e. in }\Omega_T.
    \end{equation}
    In addition, we also have from \eqref{pointwise conv b} that
    \begin{equation}
    \dfrac{1}{\sqrt{1-u^2_\ve +\ve}} \to 
      \dfrac{1}{\sqrt{1-u^2}}
    \quad \textrm{ a.e. in } \{ |u|< 1 \} \subseteq \Omega_T,
    \end{equation}
    where $\{ |u|< 1 \}:= \{ (x,t) \in \Omega \times (0,T) :  |u(x,t)| < 1  \}.$
    Then, as a direct consequence, we get
    $$
     \nabla \phi_\ve(u_\ve) \to  \dfrac{1}{\sqrt{1-u^2}} \nabla u \quad \textrm{ a.e. in } \{ |u|< 1 \} \subseteq \Omega_T.
    $$
    On the other hand, we notice that
    $$
    \{  |u| = 1 \} = \{ u= 1 \} \cup \{ u = -1 \} = \left\{ \phi(u) = \dfrac{\pi}{2} \right\} \cup  \left\{ \phi(u) = -\dfrac{\pi}{2} \right\}.
    $$
 Since $\nabla \phi(u)=0$ almost everywhere in both sets $\left\{ \phi(u) = \dfrac{\pi}{2} \right\}$ and $\left\{ \phi(u) = - \dfrac{\pi}{2} \right\} $ (see, for instance, \cite{EPDE1998}), we derive that
    $$
    \nabla \phi(u)=0 \quad \text{a.e. in } \{  |u| = 1 \} .
    $$
    Therefore, we establish that 
    \begin{equation}
    \label{nabla phiu- ae}
    \nabla \phi(u)= 
    \begin{cases}
     \dfrac{1}{\sqrt{1-u^2}} \nabla u \quad &\text{ in } \{ |u|< 1 \}
     \\
     0 \quad &\text{ in } \{ |u|= 1 \}
    \end{cases}
    \quad \text{a.e. in  } \Omega_T.
    \end{equation}
	\end{itemize}
	
	\begin{remark}
	When $\Omega$ is convex, we can immediately deduce from \eqref{phiH2} and \eqref{uH2} that 
	$$          
 u\in L^4(0,T;H^2(\Omega))\quad \text{and} \quad \phi(u) \in L^4(0,T;H^2(\Omega)).
            $$  
	\end{remark}
    
   \subsection{Passage to the limit}
Thanks to the uniform bound \eqref{E3}, it is easily seen that
    $$\int_{\Omega_T} |b_\ve(u_\ve)\nabla \mu_\ve|^2\ \dx \dt \le 2 \int_{\Omega_T} b_\ve(u_\ve)|\nabla \mu_\ve|^2\  \dx \dt \le C, $$
    uniformly in $\ve$. 
    Therefore, there exists a subsequence, not relabeled, such that
 \begin{equation}
 \label{J-limit}
    b_\ve(u_\ve) \nabla \mu_\ve \rightharpoonup \boldsymbol{J} \quad \textrm{ in }L^2(\Omega_T).
    \end{equation}
 In light of the convergences gained before, we are now ready to let $\ve \to 0$ in \eqref{ueps}. More precisely, for any $\psi \in C_c^\infty(0,T)$ and $v \in H^1(\Omega)$, we know from \eqref{ueps} that
$$
\int_0^T \langle \partial_t u_{\ve},v \rangle \ \psi(t) \ \dt = - \int_0^T 
(b_{\ve}(u_{\ve})\nabla \mu_{\ve}, \nabla v) \ \psi(t) \ \dt.
 $$
 By exploiting \eqref{conv4} and \eqref{J-limit},
 we obtain
 $$
\int_0^T \langle \partial_t u ,v \rangle \ \psi(t) \ \dt = - \int_{\Omega_T}
 \boldsymbol{J} \cdot \nabla v \, \psi(t) \ \dx \dt.
 $$
 By a classical density argument, the weak formulation \eqref{weak formulation} readily follows.
 
Next, we identify the limit flux $\boldsymbol{J}$ in a distributional sense. As a first step, we introduce a suitable weak formulation. 
For $\boldsymbol{\eta} \in C_c^\infty\left( \overline{\Omega}\times (0,T); \mathbbm{R}^3\right)$,
with $\boldsymbol{\eta}\cdot \mathbf{n}=0$ on $\partial \Omega\times (0,T)$,
 \begin{align}
    \int_{\Omega_T} b_{\ve}(u_{\ve})\nabla \mu_{\ve} \cdot \boldsymbol{\eta} \  \dx \dt 
    &= \int_{\Omega_T} \nabla \Big( b_{\ve}(u_{\ve})\mu_\ve\Big)\cdot \boldsymbol{\eta} \ \dx \dt 
    - \int_{\Omega_T} b'_\ve (u_\ve)\mu_\ve\nabla u_\ve \cdot \boldsymbol{\eta}\ \dx \dt \notag 
    \\[3pt]
        &= - \int_{\Omega_T}  b_{\ve}(u_{\ve})\mu_\ve \div \,  \boldsymbol{\eta}\ \dx \dt
        - \int_{\Omega_T} b'_\ve (u_\ve)\mu_\ve\nabla u_\ve \cdot \boldsymbol{\eta}\ \dx \dt. \label{id J}
    \end{align}
Owing to \eqref{newformulation}, the first integral on the right-hand side of \eqref{id J} can be rewritten as
  \begin{align}
        -&\int_{\Omega_T} b_{\ve}(u_{\ve})\mu _\ve  \div \,  \boldsymbol{\eta}\ \dx \dt  \notag
        \\[3pt]
    &= 2\int_{\Omega_T} \sqrt{b_\ve(u_\ve)}\Delta \phi_\ve(u_\ve)\div \,   \boldsymbol{\eta} \ \dx \dt  - \int_{\Omega_T} b_\ve(u_\ve) f(u_\ve)\div \,   \boldsymbol{\eta}  \ \dx \dt   \notag  \\[3pt]
    &\quad  + \theta_0 \int_{\Omega_T} b_\ve(u_\ve) u_\ve \div \,  \boldsymbol{\eta} \ \dx \dt .
    \label{primo}
    \end{align}
 
Now, let us define
$$
G: [-1,1] \to \mathbbm{R}, \qquad G(s):= \int_0^{s} b_\ve'(\tau)f(\tau) \ \mathrm{d}\tau= \int_0^{s} b'(\tau)f(\tau) \ \mathrm{d}\tau, \quad \forall \, s \in [-1,1].
$$
Since $b'f \in L^1(-1,1)$, we notice that $G\in AC([-1,1])$, and then $G(u_\varepsilon)$ is well defined by \eqref{B1.2}. In addition, it follows from \eqref{B1.1}--\eqref{B1.3} that $\nabla G(u_\ve)= b'(u_\ve) f(u_\ve) \nabla u_\ve$ almost everywhere in $\Omega_T$. Then,
as for the second integral on the right-hand side of \eqref{id J}, using once again \eqref{newformulation} and an integration by parts, we obtain
\begin{align}
   - &\int_{\Omega_T} b'_\ve (u_\ve)\mu_\ve\nabla u_\ve \cdot \boldsymbol{\eta}\ \dx \dt \notag\\[3pt]
    &=  2 \int_{\Omega_T} b'_\ve(u_\ve) \Delta \phi_\ve(u_\ve)\nabla \phi_\ve(u_\ve)\cdot \boldsymbol{\eta} \ \dx \dt -\int_{\Omega_T} \nabla G(u_\ve) \cdot \boldsymbol{\eta} \ \dx \dt \notag
    \\[3pt]
    &\quad + \theta_0 \int_{\Omega_T} b'_\ve(u_\ve) u_\ve \nabla u_\ve \cdot \boldsymbol{\eta} \ \dx \dt \notag \\[3pt]
    &=  2 \int_{\Omega_T} b'_\ve(u_\ve) \Delta \phi_\ve(u_\ve)\nabla \phi_\ve(u_\ve)\cdot \boldsymbol{\eta}\ \dx \dt +\int_{\Omega_T} G(u_\ve)\div \,  \boldsymbol{\eta} \ \dx \dt  \notag
\\[3pt]
    &\quad     
    + \theta_0 \int_{\Omega_T} b'_\ve(u_\ve) u_\ve \nabla u_\ve\cdot \boldsymbol{\eta} \ \dx \dt. \label{secondo}
\end{align}
Furthermore, in light of \eqref{key relation}, an integration by parts reveals that, for any $s \in (-1,1)$,
\begin{equation}
\label{CHI E G}
G(s)= \int_0^s b'(\tau)f(\tau) \, \mathrm{d}\tau
= b(s)f(s)- \int_0^s b(\tau) f'(\tau) \, \mathrm{d}\tau
=b(s)f(s)- \theta s,
\end{equation}
which can be extended by continuity in $s=\pm 1$. Hence, we observe that
\begin{align}
&- \int_{\Omega_T} b_\ve(u_\ve) f(u_\ve)\div \,   \boldsymbol{\eta}  \ \dx \dt 
+ 
\int_{\Omega_T} G(u_\ve)\div \,  \boldsymbol{\eta} \ \dx \dt
\notag
\\[3pt]
&
\quad = - \int_{\Omega_T} \varepsilon f(u_\ve)\div \,   \boldsymbol{\eta}  \ \dx \dt  - \int_{\Omega_T} \theta u_\ve \div \,  \boldsymbol{\eta}  \ \dx \dt.
\label{Cancellazione}
\end{align}
On the other hand, we also notice that
\begin{align}
\theta_0 \int_{\Omega_T} b_\ve(u_\ve) u_\ve \div \,  \boldsymbol{\eta} \ \dx \dt
+
\theta_0 \int_{\Omega_T} b'_\ve(u_\ve) u_\ve \nabla u_\ve\cdot \boldsymbol{\eta} \ \dx \dt 
= 
- \theta_0 \int_{\Omega_T} b_\ve(u_\ve) \nabla u_\ve \cdot  \boldsymbol{\eta} \ \dx \dt.
\label{Cancellazione2}
\end{align}

In summary, from \eqref{id J}--\eqref{secondo} and \eqref{Cancellazione}--\eqref{Cancellazione2}, we end up with
    \begin{equation}
        \label{limitequi}
        \int_{\Omega_T} b_{\ve}(u_{\ve})\nabla \mu_{\ve} \cdot \boldsymbol{\eta} \  \dx \dt= \sum_{i=1}^{5} I_{i, \ve},
    \end{equation}
    where
        \begin{align*}
       I_{1, \ve}&=2\int_{\Omega_T} \sqrt{b_\ve(u_\ve)}\Delta \phi_\ve(u_\ve) \div \, \boldsymbol{\eta}\ \dx \dt,&
       I_{2, \ve}&= 2 \int_{\Omega_T} b'_\ve(u_\ve) \Delta \phi_\ve(u_\ve)\nabla \phi_\ve(u_\ve)\cdot \boldsymbol{\eta} \ \dx \dt ,
       \\[3pt]
       I_{3, \ve}& = -\varepsilon  \int_{\Omega_T}  f(u_\ve)\div \,   \boldsymbol{\eta}  \ \dx \dt , &       
       I_{4,\ve}&= - \theta \int_{\Omega_T} u_\ve \div \,  \boldsymbol{\eta}  \ \dx \dt,
        \\[3pt]
        I_{5, \ve}&=- \theta_0 \int_{\Omega_T} b_\ve(u_\ve) \nabla u_\ve \cdot  \boldsymbol{\eta} \ \dx \dt.
    \end{align*}

We proceed by studying the convergence of each integral. 
\begin{enumerate}[i)]
\item From the strong convergence \eqref{convb} of $\sqrt{b_\ve(u_\ve)}$ in $ L^p(\Omega_T)$, for all $p \in [2,\infty)$, and the weak convergence of $\Delta \phi_\ve(u_\ve)$ in $L^2(  \Omega_T)$,  it follows that
$$
I_{1,\ve} \to 2\int_{\Omega_T}  \sqrt{b(u)}\Delta \phi(u) \div \,  \boldsymbol{\eta} \ \dx \dt \quad \mbox{as }\ve \to 0.$$

 \item From the definition of $b_\ve$ in \eqref{a-b-def},
$$\int_{\Omega_T} b'_\ve(u_\ve) \Delta \phi_\ve(u_\ve) \nabla\phi_\ve(u_\ve) \cdot \boldsymbol{\eta}\ \dx \dt = -\int_{\Omega_T} 2u_\ve \Delta \phi_\ve(u_\ve) \nabla\phi_\ve(u_\ve) \cdot \boldsymbol{\eta} \ \dx \dt. $$
Furthermore, we can write
\begin{align*}
    \int_{\Omega_T}  \Big[u_\ve& \Delta \phi_\ve(u_\ve) \nabla \phi_\ve(u_\ve) - u \Delta \phi(u)\nabla \phi(u)\Big]\cdot \boldsymbol{\eta} \ \dx \dt \\[3pt]
    &=\int_{\Omega_T}  \Delta \phi_\ve(u_\ve) \Big[ u_\ve\nabla \phi_\ve(u_\ve)-u\nabla\phi(u)\Big]\cdot \boldsymbol{\eta} \ \dx \dt \\[3pt]
    &\hspace{0,5 cm}+ \int_{\Omega_T} u \nabla \phi(u) \Big[ \Delta \phi_\ve(u_\ve)-\Delta\phi(u)\Big]\cdot \boldsymbol{\eta} \ \dx \dt.
\end{align*}
Firstly, we observe that $\Delta \phi_\ve(u_\ve)$ is uniformly bounded in $L^2( \Omega_T )$. Moreover, in view of  \eqref{mega} with $\alpha=\beta=6$ and \eqref{morefortegradphi} with $\alpha=\beta=3$, we deduce the strong convergence
$$u_\ve \nabla\phi_\ve(u_\ve) \to u \nabla \phi(u) \quad \mbox{ in }L^2(  \Omega_T  ).$$
Hence, the first integral converges. Moreover,
$$\|u \nabla \phi(u)\cdot \boldsymbol{\eta}\|_{L^2(\Omega_T)}\le \|u\|_{L^\infty(\Omega_T)}\|\boldsymbol{\eta}\|_{L^\infty(\Omega_T)}\|\nabla \phi(u)\|_{L^2(\Omega_T)}\le C.$$
Therefore, owing to the weak convergence \eqref{conv6} of $\Delta \phi_\ve(u_\ve)$, the second integral converges as well, and we conclude that
$$I_{2, \ve} \to \int_{\Omega_T}  b'(u) \Delta \phi(u) \nabla \phi(u) \cdot \boldsymbol{\eta}\ \dx \dt.$$

\item We exploit the crucial estimate \eqref{f L1}. Although $\|f(u_\ve)\|_{L^1(\Omega_T)}$ may diverge as $\ve \to 0$, its growth, which is at most of order $\ve^{-\frac12}$, is more than compensated by the prefactor $\ve$. 
More precisely, 
\begin{align*}
    \left|   \ve  \int_{\Omega_T} f(u_\ve) \div \, \boldsymbol{\eta} \ \dx \dt \right|&\le \ve \|f(u_\ve)\|_{L^1(\Omega_T)} \|\div \, \boldsymbol{\eta}\|_{L^\infty(\Omega_T)} \\[3pt]
    &\le C\sqrt{\ve} \|\div \, \boldsymbol{\eta}\|_{L^\infty(\Omega_T)} \to 0, \quad \textrm{for }\ve \to 0.
\end{align*}
Combining the above estimates, we can conclude that
$$
I_{3, \ve} \to 0.
$$

\item In light of \eqref{conv3}, it is easily seen that
$$
 I_{4,\ve} \to  - \theta \int_{\Omega_T} u \div \,  \boldsymbol{\eta}  \ \dx \dt.
$$

\item 
We simply notice that 
\begin{align*}
&\|  b_\ve(u_\ve) \nabla u_\ve - b(u) \nabla u\|_{L^2(\Omega_T)}
\\[3pt]
&\quad \leq 
\|  b_\ve(u_\ve) - b(u) \|_{L^4(\Omega_T)}\| \nabla u_\ve\|_{L^4(\Omega_T)}
+
\|  b(u)\|_{L^\infty(\Omega_T)} \| \nabla u_\ve - \nabla u\|_{L^2(\Omega_T)}.
\end{align*}
Recalling that $b(u)$ is uniformly bounded in $L^\infty(\Omega_T)$, thanks to \eqref{NEW-uW14}, \eqref{conv1} and \eqref{convb}, we know that $b_\ve(u_\ve) \nabla u_\ve \to b(u) \nabla u$ strongly in $L^2(\Omega_T)$, which, in turn, entails 
$$
I_{5, \ve} \to - \theta_0 \int_{\Omega_T} b(u) \nabla u \cdot  \boldsymbol{\eta} \ \dx \dt.
$$
\end{enumerate}
Collecting the above results, the limit $u$ satisfies 
\begin{align*}
\int_{\Omega_T} \boldsymbol{J} \cdot \boldsymbol{\eta} \ \dx \dt &= \int_{\Omega_T} \left[2\sqrt{b(u)}\Delta \phi(u)-
\theta u \right]\div\,  \boldsymbol{\eta} \ \dx \dt  \notag 
\\[3pt]
& \quad + \int_{\Omega_T} 2 b'(u) \Delta \phi(u)\nabla \phi(u) \cdot \boldsymbol{\eta} 
- \theta_0 b(u) \nabla u \cdot \boldsymbol{\eta} \ \dx \dt, 
\end{align*}
for all $\boldsymbol{\eta} \in C_c^\infty\left( \overline{\Omega}\times (0,T); \mathbbm{R}^3\right)$ with $\boldsymbol{\eta} \cdot \mathbf{n}=0$ on $\partial \Omega \times(0,T)$. 
Thus, a final integration by parts implies that
\begin{align*}
\int_{\Omega_T} \boldsymbol{J} \cdot \boldsymbol{\eta} \ \dx \dt &= \int_{\Omega_T} \left[2\sqrt{b(u)}\Delta \phi(u) \right]\div\,  \boldsymbol{\eta} \ \dx \dt  \notag 
\\[3pt]
& \quad + \int_{\Omega_T} 2 b'(u) \Delta \phi(u)\nabla \phi(u) \cdot \boldsymbol{\eta} 
+ (b\Psi'')(u) \nabla u \cdot \boldsymbol{\eta} \ \dx \dt, 
\end{align*}
for all $\boldsymbol{\eta} \in C_c^\infty\left( \overline{\Omega}\times (0,T); \mathbbm{R}^3\right)$ with $\boldsymbol{\eta} \cdot \mathbf{n}=0$ on $\partial \Omega \times(0,T)$.

This proves \eqref{J} and concludes the first part of Theorem \ref{Main Result}. 

\subsection{Energy inequality}  
In light of \eqref{E3}, we know that, up to a subsequence, 
\begin{equation}
\label{J hat}
\sqrt{ b_{\ve}(u_{\ve}) }\nabla \mu_{\ve} \rightharpoonup  \widehat{\boldsymbol{J}} \quad \textrm{ in }L^2(\Omega_T).
\end{equation}
Moreover, by \eqref{pointwise conv b} and \eqref{convb}, we observe that
$$
\sqrt{ b_{\ve}(u_{\ve}) } \to \sqrt{b(u)} \quad  \textrm{ in }L^p(\Omega_T), \quad \forall \, p\in [2,\infty).
$$
Hence, we simply obtain
$$
b_\ve (u_\ve) \nabla \mu_\ve= \sqrt{b_\ve(u_\ve)}\sqrt{b_\ve(u_\ve)}\nabla \mu_\ve 
 \rightharpoonup  \sqrt{b(u)} \widehat{\boldsymbol{J}} \quad \textrm{ in }L^2(\Omega_T).
$$
On the other hand, since $\boldsymbol{J}$ is the weak limit of the approximating fluxes $b_\ve (u_\ve) \nabla \mu_\ve$, we deduce that 
$\boldsymbol{J}= \sqrt{b(u)} \widehat{\boldsymbol{J}}$ almost everywhere in $\Omega_T$.

Fix now $\psi \in C_c^\infty(0,T)$ such that $\psi \geq 0$. Multiplying \eqref{energy inequality} by $\psi$ and integrating over $(0,T)$, we find
\begin{align} 
&\int_0^T  \left[ \int_\Omega  |\nabla \phi_\ve (u_\ve(t))|^2+  \Psi(u_\ve(t)) \ \dx\right] \psi(t) \ \dt \notag
\\[3pt]
&\qquad +  \int_0^T \left[ \int_0^t \int_{\Omega} b_{\ve}(u_{\ve})|\nabla \mu_{\ve}|^2 \ \dx \mathrm{d}s \right]  \psi (t) \ \dt 
 \le \int_0^T E_{\ve}(u_0) \psi(t) \ \dt.
\label{eps energy}
\end{align}
Recalling that $E_{\ve}(u_0) \leq E(u_0)$, and by using \eqref{conv5}, \eqref{J hat}, together with $F(u_\ve) -\frac{\theta_0}{2}u^2_\ve \to F(u) -\frac{\theta_0}{2}u^2$ strongly  in $L^2(\Omega_T)$, we can pass to the (lower) limit in \eqref{eps energy} obtaining
 \begin{align} 
&\int_0^T  \left[ \int_\Omega |\nabla \phi(u(t))|^2+ \Psi(u(t))  \ \dx\right] \psi(t) \dt
\notag
\\[3pt]
&\qquad 
+  \int_0^T \left[ \int_0^t \int_{\Omega} |\widehat{\boldsymbol{J}}|^2 \ \dx \mathrm{d}s \right]  \psi (t) \dt
 \le \int_0^T E(u_0) \psi(t) \ \dt.
\label{energy}
\end{align}
In light of \eqref{nabla phiu- ae}, the latter entails 
 \begin{align} 
\int_\Omega \frac{1}{1-u^2} |\nabla u(t)|^2 + \Psi(u(t))  \ \dx
 +  \int_0^t \int_{\Omega} |\widehat{\boldsymbol{J}}|^2 \ \dx \mathrm{d}s  \leq E(u_0), \quad \text{a.e. in } (0,T),
\label{energy-2}
\end{align} 
which is exactly \eqref{energy-inequality}.
The proof of Theorem \ref{Main Result} is now completed.

\section{An equivalent weak formulation: proof of Theorem \ref{formulazione alla Cances}} 


Let $u$ be a global weak solution given by Theorem \ref{Main Result}. First of all, we
recall that
\begin{equation}
\label{nabla phiu -2}
    \nabla \phi(u)= 
    \begin{cases}
     \dfrac{1}{\sqrt{1-u^2}} \nabla u \quad &\text{ in } \{ |u|< 1 \}
     \\
     0 \quad &\text{ in } \{ |u|= 1 \}
    \end{cases}
    \quad \text{a.e. in  } \Omega_T.
\end{equation} 
 and
\begin{equation}
\label{nabla u -2}
 \nabla u = \sqrt{1-u^2} \, \nabla \phi(u)= \sqrt{b(u)} \, \nabla \phi(u), \quad \text{a.e. in } \Omega_T.
\end{equation}
Recalling the definition
$$
G(u)=\int_0^u b'(s)f(s) \ \ds \in L^\infty( \Omega_T ),
$$
we now show that
\begin{equation}
\label{nabla G point}
\nabla G(u) = 
\begin{cases} 
b'(u) f(u) \nabla u \quad &\text{ in } \{ |u|< 1 \}
     \\
     0 \quad &\text{ in } \{ |u|= 1 \}
\end{cases}
\quad \text{and} \quad 
\nabla G(u)
\in L^4(0,T; L^4(\Omega)).
\end{equation}
To this end, we first prove the integral identity
\begin{equation}
    \label{nabla G}
   \int_{\Omega_T}  \nabla G(u) \cdot \boldsymbol{\eta} \ \dx \dt
   = \int_{\Omega_T}b'(u) ( \sqrt{b}f )(u) \nabla \phi(u)\cdot \boldsymbol{\eta} \ \dx \dt 
\end{equation}
for all $\boldsymbol{\eta} \in C_c^\infty\left( \Omega \times (0,T); \mathbbm{R}^3\right)$, where the function $(\sqrt{b}f)(s)$ denotes the  continuous extension of $\sqrt{b(s)}f(s)$ in $[-1,1]$ by $(\sqrt{b}f)(\pm 1)=0$.
We proceed by approximation. Let us consider
\begin{equation*}
    u_n:= h_n(u), \quad \mbox{ where } h_n(s)=\begin{cases}
        -1+\dfrac{1}{n} \quad &\mbox{if }s< -1+\dfrac{1}{n},\\[3pt]
        s&\mbox{if } -1+\dfrac{1}{n}\le s \le 1-\dfrac{1}{n},\\[3pt]
        1-\dfrac{1}{n} \quad &\mbox{if }s> 1-\dfrac{1}{n}.
    \end{cases}
\end{equation*}
It is well known that, for $n \to + \infty$,
\begin{align}
    &u_n \to u \quad \mbox{ a.e. in }\Omega_T, \label{conv un}\\[3pt]
    &\nabla u_n = \nabla u \cdot \mathbf{1}_{\left[ -1+\frac{1}{n},1-\frac{1}{n}\right]}(u) \to \nabla u \quad \mbox{ a.e. in }\Omega_T.\label{conv nabla un}
\end{align}
In particular, we have
\begin{equation}
\label{nabla phin}
\nabla \phi(u_n)= \dfrac{1}{\sqrt{b(u_n)}}\nabla u \cdot \mathbf{1}_{\left[ -1+\frac{1}{n},1-\frac{1}{n}\right]}(u) \quad \mbox{ a.e. in }\Omega_T.
\end{equation}
We notice that
\begin{align*}
    \int_{\Omega_T}b'(u_n)\sqrt{b(u_n)}f(u_n) \nabla \phi(u_n)\cdot \boldsymbol{\eta} \ \dx \dt &= \int_{\Omega_T}b'(u_n)f(u_n) \nabla u \cdot \mathbf{1}_{\left[ -1+\frac{1}{n},1-\frac{1}{n}\right]}(u) \cdot \boldsymbol{\eta} \ \dx \dt \\[3pt]
    &= \int_{\Omega_T} \nabla G(u_n)\cdot \boldsymbol{\eta} \ \dx \dt \\[3pt]
    &= -\int_{\Omega_T}  G(u_n) \div \, \boldsymbol{\eta} \ \dx \dt. 
\end{align*}
Here we have used that $G \in C^1\left(\left[ -1+\frac{1}{n},1-\frac{1}{n}\right]\right)$.
Since $G$ is a continuous function in $[-1,1]$, owing to \eqref{conv un}, we deduce that
\begin{equation}
\label{road to nabla G 1}
\int_{\Omega_T}  G(u_n)\div \, \boldsymbol{\eta} \ \dx \dt \to \int_{\Omega_T}  G(u)\div \, \boldsymbol{\eta} \ \dx \dt.
\end{equation}
The next step consists in proving that
\begin{equation}
    \label{road to nabla G 2}
    \int_{\Omega_T}b'(u_n)\sqrt{b(u_n)}f(u_n) \nabla \phi(u_n)\cdot \boldsymbol{\eta} \ \dx \dt \to \int_{\Omega_T}b'(u) (\sqrt{b}f)(u) \nabla \phi(u)\cdot \boldsymbol{\eta} \ \dx \dt.
\end{equation}
We observe that $b'(s)$ is linear, and thus continuous. 
Hence, in view of \eqref{conv un} and of the continuity of $(\sqrt{b}f)(s)$, we have
$$b'(u_n)\sqrt{b(u_n)}f(u_n) \to b'(u) (\sqrt{b}f) (u)  \quad \mbox{ a.e. in }\Omega_T.$$
Since the sequence is uniformly bounded, it is straightforward to conclude that
\begin{equation}
    \label{conv b'sqrt b f}
    b'(u_n)\sqrt{b(u_n)}f(u_n) \to b'(u) (\sqrt{b}f) (u)  \quad \mbox{ in }L^p( \Omega_T), \quad \forall \, p < \infty.
\end{equation}
As for $\nabla \phi(u_n)$, noticing that $|\nabla \phi(u_n)| \leq |\nabla \phi(u)|$ almost everywhere by \eqref{nabla phin} and $\nabla \phi(u_n) \to \nabla \phi(u)$ almost everywhere in $\Omega_T$ by \eqref{conv nabla un}, we derive that
\begin{equation}
    \label{conv nabla phi un}
    \nabla \phi(u_n) \to \nabla \phi(u) \quad \mbox{ in }L^2( \Omega_T ).
\end{equation}
Combining \eqref{conv b'sqrt b f} and \eqref{conv nabla phi un}, we infer that \eqref{road to nabla G 2} holds. Owing to \eqref{road to nabla G 1}, \eqref{nabla G} is then proved. Finally, since $b'(u)$ and $(\sqrt{b}f)(u)$ are bounded in $\Omega_T$ and $\nabla \phi(u) \in L^4(0,T; L^4(\Omega))$, we infer that $\nabla G(u) \in L^4(0,T; L^4(\Omega))$. Lastly, in light of \eqref{nabla phiu -2}, we conclude that \eqref{nabla G point} is satisfied.

We are now in the position to prove \eqref{weak formulation Cances}.
Let $q$ be defined as \eqref{q}, 
namely 
$$
q= -2 \Delta \phi(u) + (\sqrt{b}f )(u)- \theta_0 \sqrt{b(u)}u.
$$
Fix $\eta \in C_c^\infty\left( \overline{\Omega}\times (0,T)\right)$ such that $\nabla_{\mathbf{n}} \eta=0$ on $\partial \Omega \times (0,T)$.
We choose $v= \eta$ in \eqref{weak formulation} and $\boldsymbol{\eta}=\nabla \eta$ in \eqref{J}. By combining these relations, we obtain
\begin{align*}
 \int_0^T \langle \partial_t u, \eta \rangle \  \dt 
 & =
- \int_{\Omega_T} \left[2\sqrt{b(u)}\Delta \phi(u) \right] \Delta \eta \ \dx \dt  \notag 
\\[3pt]
& \quad - \int_{\Omega_T} 2 b'(u) \Delta \phi(u)\nabla \phi(u) \cdot \nabla\eta  
+ (b\Psi'')(u) \nabla u \cdot \nabla \eta \ \dx \dt
\\[3pt]
&
=\int_{\Omega_T} \sqrt{b(u)} q \Delta \eta \ \dx \dt  
- \int_{\Omega_T} (bf )(u) \Delta \eta \ \dx \dt
+\int_{\Omega_T} \theta_0 b(u) u \Delta \eta \ \dx \dt
\notag 
\\[3pt]
& \quad + \int_{\Omega_T}  b'(u)  q \nabla \phi(u) \cdot \nabla\eta  \ \dx \dt
- \int_{\Omega_T} b'(u) (\sqrt{b}f )(u) \nabla \phi(u)\cdot \nabla \eta \ \dx \dt \notag
\\[3pt]
&\quad 
+\int_{\Omega_T} \theta_0 b'(u) \sqrt{b(u)} u \nabla \phi(u)\cdot \nabla \eta \ \dx \dt
- \int_{\Omega_T} (b\Psi'')(u) \nabla u \cdot \nabla \eta \ \dx \dt.
\end{align*}
In light of \eqref{CHI E G} and \eqref{nabla G}, we observe that
\begin{align*}
&- \int_{\Omega_T} (b f )(u) \Delta \eta \ \dx \dt
- \int_{\Omega_T} b'(u) (\sqrt{b}f )(u) \nabla \phi(u)\cdot \nabla \eta \ \dx \dt
\\[3pt]
&\quad = - \int_{\Omega_T} (b f )(u) \Delta \eta \ \dx \dt
- \int_{\Omega_T} \nabla G(u) \cdot \nabla \eta \ \dx \dt
\\[3pt]
&\quad = - \int_{\Omega_T} (b f )(u) \Delta \eta \ \dx \dt
+ \int_{\Omega_T} G(u) \Delta \eta \ \dx \dt
\\[3pt]
&\quad =-
\int_{\Omega_T}  \theta u \Delta \eta \ \dx \dt.
\end{align*}
On the other hand, by \eqref{nabla u -2} and by integrating by parts, we have
\begin{align*}
&\int_{\Omega_T} \theta_0 b(u) u \Delta \eta \ \dx \dt
+\int_{\Omega_T} \theta_0 b'(u) \sqrt{b(u)} u \nabla \phi(u)\cdot \nabla \eta \ \dx \dt
- \int_{\Omega_T} (b\Psi'')(u) \nabla u \cdot \nabla \eta \ \dx \dt
\\[3pt]
&
=
\int_{\Omega_T} \theta_0 b(u) u \Delta \eta \ \dx \dt
+\int_{\Omega_T} \theta_0 b'(u) u \nabla u \cdot \nabla \eta \ \dx \dt
- \int_{\Omega_T} 	\left(\theta -\theta_0 b(u)\right) \nabla u \cdot \nabla \eta \ \dx \dt
\\[3pt]
&=- \int_{\Omega_T} 	\theta \nabla u \cdot \nabla \eta \ \dx \dt.
\end{align*}
Therefore, the latter inequalities entail 
\begin{equation}
\label{J Cances}
 \int_0^T \langle \partial_t u, \eta \rangle \  \dt 
= 
 \int_{\Omega_T} q \left[ \sqrt{b(u)} \Delta \eta +b'(u) \nabla \phi(u) \cdot \nabla\eta \right] \ \dx \dt.
\end{equation}

The proof of Theorem \ref{formulazione alla Cances} is completed. 

\bigskip

\noindent
\textbf{Acknowledgments.} 
The authors wish to thank Andrea Poiatti for identifying an error in a previous version of the manuscript.
The authors are supported by the MUR grant Dipartimento di Eccellenza 2023-2027 of Dipartimento di Matematica, Politecnico di Milano. 
M. Conti and A. Giorgini are supported by the INdAM-GNAMPA project ``Analisi di modelli di Cahn-Hilliard per la separazione di fase'', CUP E53C25002010001.

\bigskip

\noindent
\textbf{Disclosure statement.} The authors report there are no competing interests to declare.

\medskip

\noindent
\textbf{Data availability statement.} No further data is used in this manuscript.

\end{document}